\newtheorem{corollary}{Corollary}
\newtheorem{theorem}{Theorem}
\newtheorem{remark}{Remark}
\theoremstyle{definition}
\def\N{{{\rm I}\!{\rm N}}}
\def\R{{{\rm I}\!{\rm R}}}
\def\be#1{\begin{equation} \label{#1} }
\def\ee{\end{equation}}
\def\barr{\begin{array}}
\def\earr{\end{array}}
\def\eqref#1{(\ref{#1})}
\def\tfrac#1#2{{\textstyle \frac{#1}{#2}}}
\def\xdag{x^\dagger}
\def\ydel{y^\delta}
\def\cD{\mathcal{D}}
\def\cR{\mathcal{R}}
\def\Xad{X^{\mbox{\footnotesize ad}}}
\def\x1{x_{k+1}^\delta}
\def\xk{x_k^\delta}
\def\xh1{x_{k+1,h}^\delta}
\def\xkh{x_{k,h}^\delta}
\def\src{s}
\begin{document}
%
\title{Convergence and adaptive discretization of the IRGNM Tikhonov and the IRGNM Ivanov method under a tangential cone condition in Banach space
}
\author{Barbara Kaltenbacher and Mario Luiz Previatti de Souza}


\maketitle

\begin{abstract}
In this paper we consider the Iteratively Regularized Gauss-Newton Method (IRGNM) in its classical Tikhonov version and in an Ivanov type version, where regularization is achieved by imposing bounds on the solution. We do so in a general Banach space setting and under a tangential cone condition, while convergence (without source conditions, thus without rates) has so far only been proven under stronger restrictions on the nonlinearity of the operator and/or on the spaces.
Moreover, we provide a convergence result for the discretized problem with an appropriate control on the error and show how to provide the required error bounds by goal oriented weighted dual residual estimators. 
The results are illustrated for an inverse source problem for a nonlinear elliptic boundary value problem, for the cases of a measure valued and of an $L^\infty$ source. For the latter, we also provide numerical results with the Ivanov type IRGNM. 
\end{abstract}

\section{Introduction}
In this paper we consider a nonlinear ill-posed operator equation
\begin{equation}\label{Fxy}
F(x)=y\,,
\end{equation}
where the possibly nonlinear operator $F:\cD (F)\subseteq X \to
Y$ with domain $\cD (F)$ maps between real Banach spaces $X$ and $Y$.
We are interested in the ill-posed situation, i.e., $F$ fails
to be continuously invertible, and the data are contaminated with
noise, thus regularization has to be applied (see, e.g.,
\cite{EHNBuch,TikAr77}, and references therein).

Throughout this paper we will assume that an exact solution
$x^\dagger  \in \cD (F)$ of \eqref{Fxy} exists, i.e.,
$F(x^\dagger)=y$, and that the noise level $\delta$
in the (deterministic) estimate
\begin{equation}\label{delta}
\|y-y^\delta\|\leq\delta
\end{equation}
is known.

Partially we will also refer to the formulation of the inverse problem as a system of model and observation equation
\begin{eqnarray}
A(x,u)&=&0	\label{mod}\\
C(u)&=&y\,.	\label{obs}
\end{eqnarray}
Here $A:X\times V\to W^*$ and $C:V\to Y$ are the model and observation operator, so that with the parameter-to-state map $S:X\to V$ satisfying $A(x,S(x))=0$ and $F=C\circ S$, \eqref{Fxy} is equivalent to the all-at-once formulation \eqref{mod}, \eqref{obs}.

\medskip

Newton type methods for the solution of nonlinear ill-posed problems \eqref{Fxy} have been extensively studied in Hilbert spaces (see, e.g., \cite{BakuKokurin,KNSBuch} and the references therein) and more recently also in a in Banach space setting. In particular, the iteratively regularized Gauss-Newton method \cite{Baku92} can be generalized to a Banach space setting by calculating iterates $x_{k+1}^{\delta}$ in a Tikhonov type variational form as
\begin{equation}\label{IRGNMTikhonov}
\x1 \in{\rm argmin}_{x\in \cD (F)} \ \|F'(x_k^\delta)(x-x_k^\delta)+F(x_k^\delta)-\ydel\|^p +\alpha_k\cR(x)\,,
\end{equation}
see, e.g., \cite{HohageWerner13,JinZhong13,KH10,KSS09,Werner14}
where $p\in[1,\infty)$, $(\alpha_k)_{k\in\N}$ is a sequence of regularization parameters, $\cR$ is some regularization functional, and the prime denotes the G\^ateaux derivative.
Alternatively, one might introduce regularization by imposing some bound $\rho_k$ on the norm of $x$, or, again, generally, on a regularization functional of $x$
\begin{equation}\label{IRGNMIvanov}
x_{k+1}^{\delta} \in{\rm argmin}_{x\in \cD (F)} \ \frac{1}{2}\|F'(x_k^\delta)(x-x_k^\delta)+F(x_k^\delta)-\ydel\|^2 \mbox{ such that }\cR(x)\leq\rho_k\,,
\end{equation}
which corresponds to Ivanov regularization or the method of quasi solutions, see, e.g., \cite{DombrovskajaIvanov65, Ivanov62, Ivanov63, IvanovVasinTanana02, LorenzWorliczek13, NeubauerRamlau14, SeidmanVogel89}.
We restrict ourselves to the norm in $Y$ as a measure of the data misfit, but the analysis could as well be extended to more general functionals $\mathcal{S}$ satisfying certain conditions, as e.g., in \cite{HohageWerner13,Werner14}. 

As a constraint on the nonlinearity of the forward operator $F$ we impose the tangential cone condition
\begin{equation}\label{tangcone}
\|F(\tilde{x})-F(x)-F'(x)(\tilde{x}-x)\|\leq c_{tc}\|F(\tilde{x})-F(x)\|
\mbox{ for all }x\in \mathcal{B}_R
\end{equation}
(also called Scherzer condition, cf. \cite{Scherzer95})
for some set $\mathcal{B}_R\subseteq \mathcal{D}(F)\not=\emptyset$ and $c_{tc}<1/3$. Note that the  convergence conditions imposed in \cite{HohageWerner13,JinZhong13,KSS09,KH10,Werner14} in the situation without source condition, namely local invariance of the range of $F'(x)^*$, are slightly stronger, since this adjoint range invariance is sufficient for \eqref{tangcone}. However, most probably the gap is not very large, as in those application examples where \eqref{tangcone} has been verified, the proof of \eqref{tangcone} is actually done via adjoint range invariance.  

The remainder of this paper is organized as follows. In Section \ref{sec:conv} we state and prove convergence results in the continuous and discretized setting. 
Section \ref{sec:errest} shows how to actually obtain the required discretization error estimates by a goal oriented weighted dual residual approach and Section \ref{sec:modex} illustrates the theoretical finding by an inverse souce problem for a nonlinear PDE.
In Section \ref{sec:num} we provide some numerical results for this model problem and Section \ref{sec:conclrem} concludes with some remarks.

\section{Convergence}\label{sec:conv}
In this section we will study convergence of the IRGNM iterates first of all in a continuous setting, then in the situation of having discreted for computational purposes. 

The regularization parameters $\alpha_k$ and $\rho_k$ are chosen a priori 
\begin{equation}\label{alphak}
\alpha_k=\alpha_0 \theta^k \mbox{ for some } \theta\in((\tfrac{2c_{ct}}{1-c_{ct}})^p,1)
\end{equation}
(note that $(\tfrac{2c_{ct}}{1-c_{ct}})^p<1$ for $c_{tc}<1/3$) and 
\begin{equation}\label{rhok}
\rho_k\equiv \rho\geq \cR(\xdag)\,, 
\end{equation}
and the iteration is stopped according to the discrepancy principle 
\begin{equation}\label{discrprinc}
k_*=k_*(\delta,\ydel)=\min\{k\in\N_0\ : \ \|F(x_k^{\delta})-\ydel\|\leq\tau\delta\}
\end{equation}
with some fixed $\tau>1$ chosen sufficiently large but independent of $\delta$.

\begin{theorem} \label{th:conv}
Let $\cR$ be proper, convex and lower semicontintuous with $\cR(x^\dagger)<\infty$ and let, for all $r\geq \cR(x^\dagger)$, the sublevel set
\[
\mathcal{B}_r =\{ x\in \mathcal{D}(F)\, : \, \cR(x)\leq r\}
\]
be compact with respect to some topology $\mathcal{T}$ on $X$.
\\
Moroever, let $F$ be G\^ateaux differentiable in $\mathcal{B}_R$, satisfy \eqref{tangcone}, and let, for all $x\in\mathcal{B}_R$, $F'(x)$ and $F$ be $\mathcal{T}$-to-norm continuous, for some appropriately chosen $R>\cR(\xdag)$. Finally, let the family of data $(\ydel)_{\delta>0}$ satisfy \eqref{delta}.

\smallskip

\begin{enumerate}
\item[(i)] 
Then for fixed $\delta$, $\ydel$, the iterates according to \eqref{IRGNMTikhonov} and \eqref{IRGNMIvanov} are well-defined and remain in $\mathcal{B}_R$, and the stopping index $k_*$ according to the discrepancy principle is finite.
\item[(ii)] 
Moreover, for both methods we have $\mathcal{T}$-subsequential convergence as $\delta\to0$ i.e.,\\ $(x^\delta_{k_*(\delta,\ydel)})_{\delta>0}$ has a $\mathcal{T}$-convergent subsequence and the limit of every $\mathcal{T}$-convergent subsequence solves \eqref{Fxy}. If the solution $\xdag$ of \eqref{Fxy} is unique in $\mathcal{B}_R$, then $x^\delta_{k_*(\delta,\ydel)}\stackrel{\mathcal{T}}{\longrightarrow}\xdag$ as $\delta\to0$.
\item[(iii)] 
Additionally, $k_*$ satisfies the asymptotics $k_*=\mathcal{O}(\log(1/\delta))$.
\end{enumerate}
\end{theorem}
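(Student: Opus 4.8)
The plan is to treat both methods simultaneously as far as possible, exploiting that in each Newton step the iterate $\x1$ is a minimizer of a functional that is at least as small at $x=\xdag$.

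For part (i), I would first establish a \emph{uniform a priori bound} showing $\xk\in\mathcal{B}_R$ for all $k$ by induction. The key inequality comes from comparing the value of the minimized functional at $\x1$ with its value at $\xdag$. For the Tikhonov version \eqref{IRGNMTikhonov} this gives $\|F'(\xk)(\x1-\xk)+F(\xk)-\ydel\|^p + \alpha_k\cR(\x1)\le \|F'(\xk)(\xdag-\xk)+F(\xk)-\ydel\|^p + \alpha_k\cR(\xdag)$; for the Ivanov version \eqref{IRGNMIvanov} the constraint directly yields $\cR(\x1)\le\rho$ provided $\cR(\xdag)\le\rho$, and simultaneously the misfit at $\x1$ is bounded by that at $\xdag$. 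In both cases, the linearized residual $\|F'(\xk)(\xdag-\xk)+F(\xk)-\ydel\|$ is controlled via the tangential cone condition \eqref{tangcone}: write $F'(\xk)(\xdag-\xk) = F(\xdag)-F(\xk) - (F(\xdag)-F(\xk)-F'(\xk)(\xdag-\xk))$, so this term is bounded by $(1+c_{tc})\|F(\xdag)-F(\xk)\|+\delta \le (1+c_{tc})(\|F(\xk)-\ydel\|+\delta)+\delta$. Combining, one derives that $\cR(\x1)$ stays below $R$ (using the choice of $\theta$ in \eqref{alphak} for the Tikhonov case, or \eqref{rhok} for the Ivanov case) as long as the iteration has not yet stopped, i.e.\ $\|F(\xk)-\ydel\|>\tau\delta$. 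The \emph{existence} of minimizers then follows from the standard direct method: the functional is coercive in $\cR$, the sublevel sets $\mathcal{B}_r$ are $\mathcal{T}$-compact, $F$ and $F'(\xk)$ are $\mathcal{T}$-to-norm continuous, and $\cR$ is $\mathcal{T}$-lower semicontinuous (lower semicontinuity of $\cR$ with respect to $\mathcal{T}$ should follow from convexity plus the compactness of sublevel sets, or be assumed). For \emph{finiteness of $k_*$}, I would show that as long as $k<k_*$ the residual contracts geometrically: from the comparison inequality and \eqref{tangcone} one extracts $\|F(\x1)-\ydel\|\le q\|F(\xk)-\ydel\|+(\text{const})\delta$ with $q<1$ depending on $c_{tc}$ (and, for Tikhonov, on $\theta$); this forces $\|F(\xk)-\ydel\|\le\tau\delta$ after finitely many steps, and in fact after $\mathcal{O}(\log(1/\delta))$ steps, which already gives part (iii).

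For part (ii), with $\delta\to0$ let $x^\delta:=x^\delta_{k_*(\delta,\ydel)}$. By part (i), $x^\delta\in\mathcal{B}_R$, which is $\mathcal{T}$-compact, so a subsequence $\mathcal{T}$-converges to some $\bar x\in\mathcal{B}_R$. By the discrepancy principle, $\|F(x^\delta)-\ydel\|\le\tau\delta\to0$, hence $\|F(x^\delta)-y\|\le(\tau+1)\delta\to0$; by $\mathcal{T}$-to-norm continuity of $F$ we get $F(\bar x)=y$, so $\bar x$ solves \eqref{Fxy}. Uniqueness of the solution in $\mathcal{B}_R$ upgrades subsequential convergence to full convergence by the usual subsequence-of-subsequence argument.

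The step I expect to be the main obstacle is the derivation of the geometric residual contraction with an explicit constant $q<1$ in the \emph{Tikhonov} case: unlike the Ivanov case where the constraint $\cR(\x1)\le\rho$ is automatic and clean, in the Tikhonov case one must carefully juggle the penalty term $\alpha_k\cR(\x1)$ — one cannot simply drop it, and the interplay between the decreasing $\alpha_k=\alpha_0\theta^k$ and the residual requires the precise lower bound $\theta>(\tfrac{2c_{tc}}{1-c_{tc}})^p$ to close the estimate. Getting the constants to line up (so that both $\cR(\x1)\le R$ and the contraction factor $<1$ hold simultaneously, with the requirement that $\tau$ be chosen large enough) is the delicate bookkeeping at the heart of the proof. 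A secondary technical point is justifying $\mathcal{T}$-lower semicontinuity of $\cR$ and the applicability of the direct method uniformly in $k$, but this is routine given the hypotheses.
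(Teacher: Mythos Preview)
Your proposal follows essentially the same route as the paper: existence of minimizers via the direct method on $\mathcal{T}$-compact sublevel sets, a minimality comparison with $\xdag$, the tangential cone condition to pass between linearized and nonlinear residuals, and a resulting geometric recursion that yields both the $\mathcal{B}_R$ bound and $k_*=\mathcal{O}(\log(1/\delta))$. Part (ii) is handled exactly as you describe.

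There is, however, one slip that would derail the contraction if left as written. You bound the linearized residual at $\xdag$ by $(1+c_{tc})\|F(\xdag)-F(\xk)\|+\delta$, but the correct constant in front of $\|F(\xdag)-F(\xk)\|$ is $c_{tc}$, not $1+c_{tc}$: from your own decomposition,
\[
\|F'(\xk)(\xdag-\xk)+F(\xk)-\ydel\|
=\|F(\xdag)-\ydel-\bigl(F(\xdag)-F(\xk)-F'(\xk)(\xdag-\xk)\bigr)\|
\le \delta + c_{tc}\|F(\xdag)-F(\xk)\|.
\]
With your looser coefficient, after combining with the lower bound $(1-c_{tc})\|F(\x1)-\ydel\|-c_{tc}\|F(\xk)-\ydel\|$ for the left-hand side (which you also need and should state), the resulting factor on $\|F(\xk)-\ydel\|$ would be $\tfrac{1+2c_{tc}}{1-c_{tc}}>1$, so no contraction. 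With the correct $c_{tc}$ you recover the paper's factor $\tfrac{2c_{tc}}{1-c_{tc}}<1$ for $c_{tc}<1/3$, which is precisely the quantity you invoke later when discussing the condition $\theta>(\tfrac{2c_{tc}}{1-c_{tc}})^p$. The paper then handles the $p$-th power via elementary inequalities of the form $(a+b)^p\le(1+\gamma)^{p-1}a^p+((1+\gamma)/\gamma)^{p-1}b^p$ and runs the recursion on $d_k=2^{1-p}(1-c_{tc})^p\|F(\xk)-\ydel\|^p$ together with $\alpha_k\cR(\xk)$; the uniform bound $\cR(\xk)\le R$ is then \emph{read off} from this joint recursion rather than proved separately beforehand, which resolves the circularity you allude to (the outer induction you have in mind is exactly this).
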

\begin{proof}
Existence of minimizers $x_{k+1}^\delta$ of \eqref{IRGNMTikhonov} and \eqref{IRGNMIvanov} for fixed $k$, $x_k^\delta$ and $\ydel$ follows by the direct method of calculus of variations: In both cases, the cost functional 
\begin{eqnarray*}
J_k(x)=\|F'(x_k^\delta)(x-x_k^\delta)+F(x_k^\delta)-\ydel\|^p+\alpha_k\cR(x)
\mbox{ in case of \eqref{IRGNMTikhonov}},\\
J_k(x)=\frac{1}{2}\|F'(x_k^\delta)(x-x_k^\delta)+F(x_k^\delta)-\ydel\|^2
\mbox{ in case of \eqref{IRGNMIvanov}},
\end{eqnarray*}
is bounded from below and the admissible set 
\[
\Xad=\mathcal{D}(F)\mbox{ in case of \eqref{IRGNMTikhonov}}, \quad
\Xad=\mathcal{D}(F)\cap\mathcal{B}_{\rho_k}\mbox{ in case of \eqref{IRGNMIvanov}},
\]
is nonempty (for \eqref{IRGNMIvanov} this follows from $\rho_k\geq\cR(\xdag)$). Hence, there exists a minimizing sequence $(x^l)_{l\in\N}\subseteq \Xad\cap\mathcal{B}_r$ for 
\[
r=\frac{1}{\alpha_k}J_k(\xdag)
\mbox{ in case of \eqref{IRGNMTikhonov}}, \quad
r=\rho_k
\mbox{ in case of \eqref{IRGNMIvanov}},
\]
with $\lim_{l\to\infty}J_k(x^l)=\inf_{x\in \Xad} J_k(x)$.

By $\mathcal{T}$-compactness of $\Xad\cap\mathcal{B}_r=\mathcal{B}_r$, the sequence $(x^l)_{l\in\N}$ has a $\mathcal{T}$-convergent subsequence $(x^{l_m})_{m\in\N}$ with limit $\bar{x}\in \Xad\cap\mathcal{B}_r$.
Since $F'(x_k^\delta)$ is $\mathcal{T}$-to-norm continuous, we also have $\mathcal{T}$-continuity of $x\mapsto\|F'(x_k^\delta)(x-x_k^\delta)+F(x_k^\delta)-\ydel\|$, hence $\mathcal{T}$-lower semicontinuity of $J_k$, (which, in case of \eqref{IRGNMTikhonov}, is the sum of a $\mathcal{T}$-continuous and a $\mathcal{T}$-lower semicontinuous function). Thus altogether $J_k(\bar{x})\leq\liminf_{m\to\infty}J_k(x^{l_m})=\inf_{x\in \Xad} J_k(x)$ and $\bar{x}\in \Xad$, hence $\bar{x}$ is a minimizer.

\medskip

Note that (ii) follows from (i) by standard arguments and our assumption on $\mathcal{T}$-compactness of $\mathcal{B}_R$. Thus it remains to  prove (i) and (iii)for the two versions \eqref{IRGNMTikhonov}, \eqref{IRGNMIvanov} of the IRGNM.

For this purpose we are going to show that for every $\delta >0$, there exists $k_*=k_*(\delta,\ydel)$ such that $k_* \sim \log (1/\delta)$, and the stopping criterion according to the discrepancy principle $\|F(x^\delta_{k_*(\delta,\ydel)})-\ydel\| \leq \tau\delta$ is satisfied. For \eqref{IRGNMTikhonov}, we also need to show that $\cR (x^\delta_{k_*(\delta,\ydel)})$ is bounded, whereas in \eqref{IRGNMIvanov} this automatically holds by \eqref{rhok}.

We start with \eqref{IRGNMTikhonov}. 
Using the minimality of $\x1$ and \eqref{delta}, \eqref{tangcone}, as well as $\xdag\in \cD(F)$, we have 
\begin{eqnarray*}
&&\|F'(\xk)(\x1-\xk)+F(\xk)-\ydel\|^p +\alpha_k\mathcal{R}(\x1) \\
&&\leq \|F'(\xk)(\xdag-\xk)+F(\xk)-\ydel\|^p + \alpha_k\mathcal{R}(\xdag)\\
&&\leq \Bigl(c_{tc}\|F(\xk)-\ydel\|+(1+c_{tc})\delta\Bigr)^p + \alpha_k\mathcal{R}(\xdag),
\end{eqnarray*}
and on the other hand
\begin{eqnarray*}
&&\|F'(\xk)(\x1-\xk)+F(\xk)-\ydel\|^p +\alpha_k\mathcal{R}(\x1) \\
&&\geq \Bigl((1-c_{tc})\|F(\x1)-\ydel\|-c_{tc}\|F(\xk)-\ydel\|\Bigr)^p +\alpha_k\mathcal{R}(\x1).
\end{eqnarray*}

To handle the power $p$ we make use of the following inequalities that can be proven by solving	extremal value problems, see the appendix
\begin{equation}\label{abab}
(a+b)^p \leq (1+\gamma)^{p-1}a^p+\left(\frac{1+\gamma}{\gamma}\right)^{p-1}b^p \mbox{ and } (a-b)^p \geq (1-\epsilon)^{p-1}a^p-\left(\frac{1-\epsilon}{\epsilon}\right)^{p-1}b^p,
\end{equation}
for all $a,b > 0,$ $p \geq 1$ and $\gamma, \epsilon \in (0,1)$.

Hence, the following general estimate holds 
\begin{eqnarray}\label{genfor}
&&(1-\epsilon)^{p-1}(1-c_{tc})^p\|F(\x1)-\ydel\|^p+\alpha_k\mathcal{R}(\x1)\\
&&\leq \left( (1+\gamma)^{p-1}+\left(\frac{1-\epsilon}{\epsilon}\right)^{p-1}\right)c_{tc}^p\|F(\xk)-\ydel\|^p+\alpha_k\mathcal{R}(\xdag)+\left(\frac{1+\gamma}{\gamma}\right)^{p-1}(1+c_{tc})^p\delta^p,\nonumber
\end{eqnarray}
for $\gamma, \epsilon \in (0,1).$

So in order for this recursion to yield geometric decay of $\|F(\xk)-\ydel\|$, we need to ensure 
\begin{equation}\label{q1}
(1-\epsilon)^{p-1}(1-c_{tc})^p > \left( (1+\gamma)^{p-1}+\left(\frac{1-\epsilon}{\epsilon}\right)^{p-1}\right)c_{tc}^p 
\end{equation}
for a proper choice of $\epsilon,\gamma\in(0,1)$. To obtain the largest possible (and therefore least restrictive) bound on $c_{tc}$, we rewrite the requirement above as
\begin{eqnarray*}
\left(\frac{c_{tc}}{1-c_{tc}}\right)^p &<& 
\sup_{\epsilon,\gamma\in(0,1)} (1-\epsilon)^{p-1}\left( (1+\gamma)^{p-1}+\left(\frac{1-\epsilon}{\epsilon}\right)^{p-1}\right)^{-1}\\
&=&\sup_{\epsilon\in(0,1)} \underbrace{(1-\epsilon)^{p-1}\left( 1+\left(\frac{1-\epsilon}{\epsilon}\right)^{p-1}\right)^{-1}}_{=\phi(\epsilon)} = \phi(\tfrac12)=2^{-p},
\end{eqnarray*}
as can be found out by evaluating the derivative of $\phi$
\[
\phi'(\epsilon)= -(p-1)(1-\epsilon)^{p-2}\left( 1+\left(\frac{1-\epsilon}{\epsilon}\right)^{p-1}\right)^{-2}\left(1-\left(\frac{1-\epsilon}{\epsilon}\right)^p\right)\,.
\]
Thus we will furtheron set $\epsilon=\frac12$ and assume that $\gamma>0$ is sufficiently small so that \eqref{q1} holds with $\epsilon=\frac12$, i.e., 
\begin{equation}\label{q}
q:=\frac{(1+\gamma)^{p-1}+1}{2}\left(\frac{2c_{tc}}{1-c_{tc}}\right)^{p} \in (0,1)\,.
\end{equation}
Additionally, we use the following abbreviations 
\begin{eqnarray*}
&&d_k:= 2^{1-p}(1-c_{tc})^p\|F(\xk)-\ydel\|^p,\\
&&\mathcal{R}_k:=\mathcal{R}(\xk),  \quad \mathcal{R}^\dagger:=\mathcal{R}(\xdag),\\
&&C:=\left(\frac{1+\gamma}{\gamma}\right)^{p-1}(1+c_{tc})^p.
\end{eqnarray*}
Then, using \eqref{alphak}, estimate \eqref{genfor} can be written as
\begin{equation}\label{recursion}
d_{k+1}+\alpha_k\mathcal{R}_{k+1}\leq qd_k +\alpha_0\theta^k\mathcal{R}^\dagger+C\delta^p,
\end{equation}
which we first of all regard as a recursive estimate for $d_k$.

By induction, we have for all $l\in\{0,\ldots,k\}$
\begin{equation}\label{induction}
d_{k+1}+\alpha_k\mathcal{R}_{k+1} \leq q^{l+1} d_{k-l}+\left(1+\frac{q}{\theta}+\ldots+\left(\frac{q}{\theta}\right)^l\right)\alpha_0 \theta^k\mathcal{R}^\dagger +(1+q+\ldots+q^l)C\delta^p.
\end{equation}
Indeed, for $l=0$, \eqref{induction} is just \eqref{recursion}.
Suppose that \eqref{induction} holds for $l$, then using \eqref{recursion} with $k$ replaced by $k-(l+1)$, we obtain the formula for $l+1$
\begin{eqnarray*}
&&d_{k+1}+\alpha_k\mathcal{R}_{k+1} \\
&&\leq q^{l+1}\Bigl(qd_{k-(l+1)}+\alpha_0 \theta^{k-(l+1)}\mathcal{R}^\dagger +C\delta^p\Bigr)\\
&&\qquad+\left(1+\frac{q}{\theta}+\ldots+\left(\frac{q}{\theta}\right)^l\right)\alpha_0 \theta^k\mathcal{R}^\dagger +(1+q+\ldots+q^l)C\delta^p\\ 
&&= q^{(l+1)+1} d_{k-(l+1)}+\left(1+\frac{q}{\theta}+\ldots+\left(\frac{q}{\theta}\right)^l+\left(\frac{q}{\theta}\right)^{l+1}\right)\alpha_0 \theta^k\mathcal{R}^\dagger \\
&&\qquad+(1+q+\ldots+q^l+q^{l+1})C\delta^p,
\end{eqnarray*}
and the induction proof is complete.

Hence, setting $l=k$ in \eqref{induction} and using the geometric series formula, we get 
\begin{eqnarray}\label{indu}
d_{k+1}+\alpha_k\mathcal{R}_{k+1} < q^{k+1}d_0 +\left(\frac{1}{1-\frac{q}{\theta}}\right)\alpha_k\mathcal{R}^\dagger+\left(\frac{1}{1-q}\right)C\delta^p.
\end{eqnarray}
provided $\frac{q}{\theta}<1$, which by definition of $q$ \eqref{q} is achievable for $\gamma>0$ sufficiently small, due to $\theta>(\tfrac{2c_{ct}}{1-c_{ct}})^p$, cf. \eqref{alphak}.

We next show that the discrepancy stopping criterion from \eqref{discrprinc}, i.e.,  $d_{k_*} \leq \tilde{\tau}\delta^p$ for $\tilde{\tau}=2^{1-p}(1-c_{tc})^p\tau^p$, will be satisfied after finitely many, namely $O(\log(1/\delta))$, steps.
For  this  purpose, note that $\tilde{\tau}>\frac{C}{1-q}$, provided $\tau$ is chosen sufficiently large, which we assume to be done. Thus, indeed, using \eqref{alphak}, \eqref{indu}, we have
\begin{equation}\label{idk}
d_{k}\leq d_{k}+\alpha_{k-1}\mathcal{R}_{k} < \theta^{k}\Bigl(d_0 +\frac{\alpha_0}{\theta -q} \mathcal{R}^\dagger\Bigr)+\frac{C}{1-q}\delta^p,
\end{equation}
where the right hand side falls below $\tilde{\tau}\delta^p$ as soon as
\[
k\geq (\log 1/\theta)^{-1}\left(p\log (1/\delta) +\log \left(d_0+\frac{\alpha_0}{\theta -q}\mathcal{R}^\dagger\right)-\log \left(\tilde{\tau}-\frac{C}{1-q}\right)\right)=:\bar{k}(\delta).
\]
Thus we get the upper estimate $k_{*}(\delta,\ydel)\leq\bar{k}(\delta)=O(\log(1/\delta))$.

To finish the convergence proof of \eqref{IRGNMTikhonov} we estimate $\mathcal{R}(x_{k_*(\delta,\ydel)}^{\delta})$. According to our notation, from \eqref{idk} and the fact that  $\alpha_{k-1}\mathcal{R}_{k}\leq d_{k}+\alpha_{k-1}\mathcal{R}_{k}$ as well as the bound on $k_*(\delta,\ydel)$ we just derived, we get, for all $k\in\{1,\ldots, k_*(\delta,\ydel)\}$
\begin{eqnarray}
\mathcal{R}_{k} &\leq& 
\frac{\theta^{k}}{\alpha_{k-1}}\left(d_0 +\frac{\alpha_0 \mathcal{R}^\dagger}{\theta -q}\right)+\frac{1}{\alpha_{k-1}}\frac{C}{1-q}\delta^p
\nonumber\\
&=&\theta \left(\frac{d_0}{\alpha_0}+\frac{\mathcal{R}^\dagger}{\theta -q}\right)+\frac{1}{\alpha_0}\frac{C}{1-q}\frac{\delta^p}{\theta^{k-1}}
\nonumber\\
&\leq&
\theta \left(\frac{d_0}{\alpha_0}+\frac{\mathcal{R}^\dagger}{\theta -q}\right)
\left(1+\frac{C}{1-q} \left(\tilde{\tau}-\frac{C}{1-q}\right)^{-1}\right)=:R.
\label{defR}
\end{eqnarray}

It remains to show finiteness of the stopping index for \eqref{IRGNMIvanov}, as boundedness of the $\mathcal{R}$ values by $R=\rho$ holds by definition.
Applying the minimality argument with $\xdag$ being admissible (cf.\eqref{rhok}) to \eqref{IRGNMIvanov} leads to the special case $p=1$, $\alpha_k=0$ in \eqref{genfor}
\[
(1-c_{tc})\|F(\x1)-\ydel\| \leq 2c_{tc}\|F(\xk)-\ydel\|+(1+c_{tc})\delta.
\]
Our notation becomes 
\begin{eqnarray*}
&&d_k:= (1-c_{tc})\|F(\xk)-\ydel\|^2,\\
&&q:=\frac{2c_{tc}}{1-c_{tc}} \in (0,1),\\ 
&&C:=(1+c_{tc}),
\end{eqnarray*}
which gives
\[
d_{k+1} \leq qd_k+C\delta,
\]
and by induction, one can conclude
\[
d_{k} < q^{k}d_0 +\left(\frac{1}{1-q}\right)C\delta,
\]
where the right hand side is smaller than $\tilde{\tau}\delta$ (with $\tilde{\tau}=(1-c_{tc})\tau$) for all
\[
k \geq (\log 1/q)^{-1}\left(p\log (1/\delta) +\log d_0 -\log \left(\tilde{\tau}-\frac{C}{1-q}\right)\right)=:\bar{k}(\delta),
\]
so that we can again conclude $k_{*}(\delta,\ydel)\leq\bar{k}(\delta)=O(\log (1/\delta))$.
\end{proof}

Now we consider the appearance of discretization errors in the numerical solution of \eqref{IRGNMTikhonov}, \eqref{IRGNMIvanov} arising from restriction of the minimization to finite dimensional subspaces $X^k_h$ and leading to discretized iterates $x_{k,h}^\delta$ and an approximate version $F^k_h$ of the forward operator i.e., we consider
the discretized version of Tikhonov-IRGNM \eqref{IRGNMTikhonov} 
\begin{equation}\label{TIRGNMh}
\xh1 \in{\rm argmin}_{x\in \cD (F)\cap X^k_h} \  \|{F^k_h}'(\xkh)(x-\xkh)+F^k_h(\xkh)-\ydel\|^p+\alpha_k\mathcal{R}(x).
\end{equation}
and of Ivanov-IRGNM \eqref{IRGNMIvanov}
\begin{equation}\label{IIRGNMh}
\xh1 \in{\rm argmin}_{x\in \cD (F)\cap X^k_h} \   \frac{1}{2}\|{F^k_h}'(\xkh)(x-\xkh)+F^k_h(\xkh)-\ydel\|^2 \mbox{ such that } \cR(x) \leq \rho,
\end{equation}
respectively.
Moreover, also in the discrepancy principle, the residual is replaced by its actually computable discretized version
\begin{equation}\label{discrprinc_h}
k_*=k_*(\delta,\ydel)=\min\{k\in\N_0\ : \ \|F^k_h(x_{k,h}^{\delta})-\ydel\|\leq\tau\delta\}\,.
\end{equation}

We define the auxiliary continuous iterates  
\begin{equation}\label{TIRGNM}
\x1 \in{\rm argmin}_{x\in \cD (F)} \  \|F'(\xkh)(x-\xkh)+F(\xkh)-\ydel\|^p+\alpha_k\mathcal{R}(x).
\end{equation}
and 
\begin{equation}\label{IIRGNM}
\x1 \in{\rm argmin}_{x\in \cD (F)} \ \|F'(\xkh)(x-\xkh)+F(\xkh)-\ydel\|
\mbox{ such that } \cR(x) \leq \rho,
\end{equation}
respectively in order to be able to use minimality, i.e., compare with the continuous exact solution $\xdag$.
For an illustration we refer to \cite[Figure 1]{KKV14}. 

First of all, we assess how large the discretization errors can be allowed to still enable convergence. Later on, in Section \ref{sec:errest}, we will describe how to really obtain such estimates a posteriori and to achieve the prescribed accuracy by adaptive discretization.

\begin{corollary}\label{cor:conv}
Let the assumptions of Theorem \ref{th:conv} be satisfied 
and assume that the discretization error estimates
\begin{eqnarray} 
&&\|F(x_{k+1,h}^{\delta})-\ydel\|-\|F(x_{k+1}^{\delta})-\ydel\|\,  \leq \eta_{k+1}
\label{eta}\\
&&\left| \|F_h^k(x_{k,h}^{\delta})-\ydel\|-\|F(x_{k,h}^{\delta})-\ydel\|\right|\,  \leq \xi_k
\label{xi}\\
&&\cR(x_{k,h}^{\delta})-\cR(x_{k}^{\delta})\leq \zeta_k 
\label{zeta}
\end{eqnarray}
(note that no absolute value is needed in \eqref{eta}, \eqref{zeta}; moreover, \eqref{zeta} is only be needed for \eqref{IRGNMTikhonov})
hold with
\begin{equation}\label{etaxizeta}
\eta_k\leq c_\eta\|F(x_{k,h}^{\delta})-\ydel\|+\bar{\tau}\delta, \quad\xi_k\leq \hat{\tau}\delta, \quad\zeta_k\leq \bar{\zeta}.
\end{equation}
for all $k\leq k_*(\delta,\ydel)$ and constants $c_\eta,\bar{\tau}>0$ sufficiently small, $\hat{\tau}\in(0,\tau)$, $\bar{\zeta}>0$.

Then the assertions of Theorem \ref{th:conv} remain valid for $x^\delta_{k_*(\delta,\ydel),h}$ in place of $x^\delta_{k_*(\delta,\ydel)}$ with \eqref{discrprinc_h} in place of \eqref{discrprinc}.
\end{corollary}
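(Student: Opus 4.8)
The plan is to rerun the proof of Theorem \ref{th:conv} almost verbatim, with the auxiliary continuous iterates \eqref{TIRGNM}, \eqref{IIRGNM} in the role of the continuous iterates \eqref{IRGNMTikhonov}, \eqref{IRGNMIvanov} there, and to absorb the three discretization errors \eqref{eta}--\eqref{zeta} into the constants of the recursions. The point is that $\x1$ defined by \eqref{TIRGNM} (resp.\ \eqref{IIRGNM}) still minimizes a functional \emph{linearized at the discrete point} $\xkh$ over all of $\cD(F)$ (resp.\ over $\cD(F)\cap\mathcal{B}_\rho$), so $\xdag$ is admissible and the minimality comparison with $\xdag$ carries over unchanged. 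Applying \eqref{tangcone} at $x=\xkh$, and at $x=\xkh$ with $\tilde{x}=\x1$ --- legitimate once we know $\xkh,\x1\in\mathcal{B}_R$, which we establish by induction along the way --- the derivation of \eqref{genfor} reproduces, with $\xkh$ in place of $\xk$ and $\epsilon=\tfrac12$,
\[
2^{1-p}(1-c_{tc})^p\|F(\x1)-\ydel\|^p+\alpha_k\cR(\x1)\leq\bigl((1+\gamma)^{p-1}+1\bigr)c_{tc}^p\|F(\xkh)-\ydel\|^p+\alpha_k\cR(\xdag)+C\delta^p,
\]
with $C=\bigl(\tfrac{1+\gamma}{\gamma}\bigr)^{p-1}(1+c_{tc})^p$ exactly as in Theorem \ref{th:conv} (here $\x1$ denotes the iterate \eqref{TIRGNM}).

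Next I would insert the discretization errors. From \eqref{eta} and \eqref{etaxizeta} one has $(1-c_\eta)\|F(\xh1)-\ydel\|\leq\|F(\x1)-\ydel\|+\bar{\tau}\delta$; feeding this into the left-hand side above through the second inequality of \eqref{abab} with a small parameter $\epsilon'\in(0,1)$, and using $\cR(\x1)\geq\cR(\xh1)-\bar{\zeta}$ from \eqref{zeta}, \eqref{etaxizeta}, one obtains a recursion of exactly the form \eqref{recursion} for $\hat{d}_k:=2^{1-p}(1-c_{tc})^p(1-\epsilon')^{p-1}(1-c_\eta)^p\|F(\xkh)-\ydel\|^p$, namely
\[
\hat{d}_{k+1}+\alpha_k\cR(\xh1)\leq\hat{q}\,\hat{d}_k+\alpha_0\theta^k\cR(\xdag)+\alpha_0\bar{\zeta}\,\theta^k+\hat{C}\delta^p,\qquad\hat{q}=\frac{q}{(1-\epsilon')^{p-1}(1-c_\eta)^p},
\]
with $q$ from \eqref{q} and $\hat{C}=C+2^{1-p}(1-c_{tc})^p\bigl(\tfrac{1-\epsilon'}{\epsilon'}\bigr)^{p-1}\bar{\tau}^p$. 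Since $q<\theta<1$ can be arranged as in Theorem \ref{th:conv} by taking $\gamma$ small, one may additionally take $\epsilon'$ and --- this is the meaning of ``$c_\eta$ sufficiently small'' --- $c_\eta$ small enough that $\hat{q}<\theta<1$; the extra geometric term $\alpha_0\bar{\zeta}\theta^k$ is harmless. Then the induction \eqref{induction}--\eqref{idk} applies verbatim, giving $\hat{d}_k\leq\theta^{k}(\cdots)+\tfrac{\hat{C}}{1-\hat{q}}\delta^p$; retaining the $\alpha_k\cR(\xh1)$ term and arguing as for \eqref{defR}, with the additional constants coming from $\bar{\zeta}$, yields a (now also $\bar{\zeta}$-dependent) bound $\cR(\xkh)\leq R$ for all $k\leq k_*$, which closes the $\mathcal{B}_R$-membership induction (started at the fixed initial guess $x_0^\delta=x_{0,h}^\delta$); that $\x1\in\mathcal{B}_R$ follows from the same estimate applied to its own minimality. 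For \eqref{IIRGNMh} the $\cR$-bound is automatic from the constraint $\cR(\xh1)\leq\rho$, and the recursion collapses to the $p=1$, $\alpha_k=0$ case as at the end of the proof of Theorem \ref{th:conv}, now with contraction factor $\tfrac{2c_{tc}}{(1-c_{tc})(1-c_\eta)}$, still $<1$ for $c_\eta$ small.

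It remains to handle the discretized discrepancy principle \eqref{discrprinc_h}. By \eqref{xi}, \eqref{etaxizeta}, $\bigl|\,\|F_h^k(\xkh)-\ydel\|-\|F(\xkh)-\ydel\|\,\bigr|\leq\hat{\tau}\delta$ with $\hat{\tau}<\tau$, so $\|F(\xkh)-\ydel\|\leq(\tau-\hat{\tau})\delta$ forces $\|F_h^k(\xkh)-\ydel\|\leq\tau\delta$; hence the index $k_*$ defined by \eqref{discrprinc_h} is no larger than the first $k$ with $\hat{d}_k\leq 2^{1-p}(1-c_{tc})^p(1-\epsilon')^{p-1}(1-c_\eta)^p(\tau-\hat{\tau})^p\delta^p$, and by the geometric decay of $\hat{d}_k$ this occurs for some $k=O(\log(1/\delta))$, provided $\tau$ is chosen large enough (relative to the fixed $\hat{\tau},\bar{\tau},\hat{q}$) that this threshold exceeds $\tfrac{\hat{C}}{1-\hat{q}}$. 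At that index $\|F(x_{k_*,h}^\delta)-\ydel\|\leq(\tau+\hat{\tau})\delta$, hence $\|F(x_{k_*,h}^\delta)-y\|\to0$ as $\delta\to0$; combined with $x_{k_*,h}^\delta\in\mathcal{B}_R$ ($\mathcal{T}$-compact) and the $\mathcal{T}$-to-norm continuity of $F$, assertions (ii) and (iii) follow by the same standard subsequence argument as in Theorem \ref{th:conv}.

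I expect the main difficulty to be bookkeeping rather than conceptual: one must run the $\mathcal{B}_R$-membership induction and the residual/$\cR$ recursion simultaneously (for $\xkh$ and for the auxiliary $\x1$), and verify that the smallness requirements on $c_\eta,\bar{\tau},\epsilon'$ together with ``$\tau$ sufficiently large'' can be met \emph{compatibly} with the already-fixed $\theta\in\bigl((\tfrac{2c_{tc}}{1-c_{tc}})^p,1\bigr)$ --- i.e.\ that the perturbed contraction factor satisfies $\hat{q}<\theta$ while the perturbed discrepancy threshold still exceeds $\tfrac{\hat{C}}{1-\hat{q}}$. Everything else is a line-by-line transcription of the computations in the proof of Theorem \ref{th:conv}.
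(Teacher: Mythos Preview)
Your proposal is correct and follows essentially the same route as the paper: both use the auxiliary continuous iterates \eqref{TIRGNM}/\eqref{IIRGNM}, compare with $\xdag$ via minimality, apply \eqref{tangcone} at the discrete point $\xkh$, absorb \eqref{eta}--\eqref{zeta} into a perturbed geometric recursion, and translate \eqref{discrprinc_h} via \eqref{xi}. The only difference is cosmetic bookkeeping in how the $p$-th power is split: you pull out a factor $(1-c_\eta)^p(1-\epsilon')^{p-1}$ and carry it in $\hat d_k$, whereas the paper introduces a second parameter $\tilde\gamma$ in \eqref{abab} to isolate a term $D\eta_{k+1}^p$ and then sets $q=\tilde q+Dc_\eta$; both yield a contraction factor that can be pushed below $\theta$ for $c_\eta$ small.
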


\begin{proof}
As before, from the minimality of $\x1$ and \eqref{delta}, \eqref{tangcone} as well as $\xdag \in \cD(F)$, we have
\begin{eqnarray*}
&&\Bigl((1-c_{tc})\|F(\x1)-\ydel\|-c_{tc}\|F(\xkh)-\ydel\|\Bigr)^p +\alpha_k\cR(\x1) \\
&&\leq \Bigl(c_{tc}\|F(\xkh)-\ydel\|+(1+c_{tc})\delta\Bigr)^p + \alpha_k\cR(\xdag),
\end{eqnarray*}
then using \eqref{eta}, \eqref{zeta},
\begin{eqnarray*}
&&\Bigl((1-c_{tc})(\|F(\xh1)-\ydel\|-\eta_{k+1})-c_{tc}\|F(\xkh)-\ydel\|\Bigr)^p +\alpha_k\mathcal{R}(\xh1) \\
&&\leq \Bigl(c_{tc}\|F(\xkh)-\ydel\|+(1+c_{tc})\delta\Bigr)^p + \alpha_k\cR(\xdag)+\alpha_k\zeta_{k+1}.
\end{eqnarray*}

Hence, with the same technique as in the proof of Theorem \ref{th:conv}, using \eqref{abab} with $\epsilon=\frac12$, we have
\begin{eqnarray*}
d_{k+1,h}+\alpha_k \cR_{k+1,h} 
&\leq& \tilde{q} d_{k,h} + \alpha_0\theta^k (\cR^\dagger+\zeta_{k+1})+C\delta^p+D\eta_{k+1}^p\\
&\leq& q d_{k,h} + \alpha_0\theta^k (\cR^\dagger+\zeta_{k+1})+(C+D\bar{\tau}^p)\delta^p\,,
\end{eqnarray*}
using \eqref{etaxizeta}, where
\begin{eqnarray*}
&&d_{k,h}:= 2^{1-p}(1-c_{tc})^p\|F(\xkh)-\ydel\|^p,\\
&&\tilde{q}:=\frac{(1+\gamma)^{p-1}+(1+\tilde{\gamma})^{p-1}}{2}\left(\frac{2c_{tc}}{1-c_{tc}}\right)^{p}\,, \quad q=\tilde{q}+Dc_\eta\  
\in (0,1),\\
&&\cR_{k,h}:=\mathcal{R}(\xkh),  \quad \cR^\dagger:=\mathcal{R}(\xdag),\\
&&C:=\left(\frac{1+\gamma}{\gamma}\right)^{p-1}(1+c_{tc})^p, \quad D:=\left(\frac{1+\tilde{\gamma}}{\tilde{\gamma}}\right)^{p-1}(1-c_{tc})^p,
\end{eqnarray*}
for $\gamma, \tilde{\gamma}, c_\eta \in (0,1)$, which are chosen small enough so that $q<\theta$.
From this, by induction we conclude

\begin{equation}\label{indu2}
d_{k+1,h}+\alpha_k\mathcal{R}_{k+1,h} \leq q^{k+1}d_0 +\left(\frac{1}{1-\frac{q}{\theta}}\right)\alpha_k(\mathcal{R}^\dagger+\bar{\zeta})+\left(\frac{1}{1-q}\right)(C+D\bar{\tau}^p)\delta^p
\end{equation} 
Hence, by \eqref{xi}, \eqref{etaxizeta}, we have the following estimate
\[
\|F^k_h(x_{k,h}^\delta)-\ydel\|\leq \hat{\tau}\delta+\left(\frac{2^{p-1}}{(1-c_{tc})^p}
\left(\theta^{k}\left(d_0+\frac{\alpha_0}{\theta -q}(\cR^\dagger+\bar{\zeta})\right)+\frac{C+D\bar{\tau}^p}{1-q}\delta^p\right)\right)^{1/p}\,,
\]
where the right hand side falls below $\tau\delta$ as soon as
\[
k\geq (\log 1/\theta)^{-1}\left(p\log (1/\delta) +\log \left(d_0+\frac{\alpha_0}{\theta -q}(\mathcal{R}^\dagger+\bar{\zeta})\right)-\log \left(\tilde{\tau}-\frac{C+D\bar{\tau}^p}{1-q}\right)\right)=:\bar{k}(\delta),
\]
for $\tilde{\tau}= 2^{1-p}(1-c_{tc})^p(\tau-\hat{\tau})^p$. 
Note that $\tilde{\tau} > \frac{C+D\bar{\tau}^p}{1-q}$, provided $\tau$ is chosen sufficiently large, which we assume to be done. 
That is, we have shown that the discrepancy stopping criterion from \eqref{discrprinc} (with $F$ replaced by $F^k_h$) will be satisfied after finitely many, namely $O(\log(1/\delta))$, steps.

On the other hand, the continuous discrepancy at the iterate defined by the discretized discrepancy principle \eqref{discrprinc_h} by \eqref{xi}, \eqref{etaxizeta} satisfies 
\[
\|F(x_{k,h}^\delta)-\ydel\|\leq (\tau+\hat{\tau})\delta\,.
\]

To estimate $\cR(x^\delta_{k_*(\delta,\ydel),h})$, note that according to our notation, from \eqref{indu2}, we get, like in \eqref{defR}, that for all $k\in \{1,\ldots,k_*(\delta,\ydel)\}$
\begin{eqnarray*}
\mathcal{R}_{k} &\leq& 
\theta \left(\frac{d_0}{\alpha_0}+\frac{\mathcal{R}^\dagger+\bar{\zeta}}{\theta -q}\right)
\left(1+\frac{C+D\bar{\tau}^p}{1-q} \left(\tilde{\tau}-\frac{C+D\bar{\tau}^p}{1-q}\right)^{-1}\right)=:R.
\end{eqnarray*}
It remains to show finiteness of the stopping index for the discretized Ivanov-IRGNM \eqref{IIRGNMh}.
By minimality of $\x1$ and \eqref{eta}, for this problem we have
\[
(1-c_{tc})\|F(\xh1)-\ydel\| \leq 2c_{tc}\|F(\xkh)-\ydel\|+(1+c_{tc})\delta +(1-c_{tc})\eta_{k+1}.
\]
which with 
\begin{eqnarray*}
&&d_{k,h}:= (1-c_{tc})\|F(\xkh)-\ydel\|^2,\\
&&\tilde{q}:=\frac{2c_{tc}}{1-c_{tc}}\,, \quad q=\tilde{q}+Dc_\eta\ \in (0,1),\\ 
&&C:=(1+c_{tc}), \quad D:=(1-c_{tc}),
\end{eqnarray*}
by induction, \eqref{xi} and \eqref{etaxizeta} gives
\[
\|F^k_h(x_{k,h}^\delta)-\ydel\|\leq \frac{1}{1-c_{tc}} d_{k,h} +\xi_{k} \leq \frac{1}{1-c_{tc}}\left(q^{k}d_0 + \frac{C+D\bar{\tau}}{1-q}\delta\right)+\hat{\tau}\delta,
\]
where the right hand side is smaller than $\tau\delta$ for all
\[
k \geq (\log 1/q)^{-1}\left(p\log (1/\delta) +\log d_0 -\log \left(\tilde{\tau}-\frac{C+D\bar{\tau}}{1-q}\right)\right)=:\bar{k}(\delta),
\]
with $\tilde{\tau}=(1-c_{tc})(\tau-\hat{\tau})$, so that we can again conclude $k_{*}(\delta,\ydel)\leq\bar{k}(\delta)=O(\log(1/\delta))$.
\end{proof}

\section{Error estimators for adaptive discretization}\label{sec:errest}

The error estimators $\eta_k$, $\xi_k$ and $\zeta_k$ can be quantified, e.g., by means of a goal oriented dual weighted residual (DWR) approach \cite{BeVex}, applied to the minimization problems 
\begin{eqnarray}\label{PDEminTikhonov}
\hspace*{-1cm}(\xh1,v_{k,h}^\delta,u^\delta_{k+1},u_{k,h}^\delta)&&\in{\rm argmin}_{(x,v,u,\tilde{u})\in\mathcal{D}{(F)}\times V^3} \|C'(\tilde{u})v+C(\tilde{u})-\ydel\|^p+\alpha_k\mathcal{R}(x)\\ \nonumber
\mbox{ s.t. } \forall w\in W: &&  
\langle A'_x(x_{k,h}^\delta,\tilde{u})(x-x_{k,h}^\delta)+A'_u(x_{k,h}^\delta,\tilde{u})v,w\rangle_{W^*,W}=0,\\\nonumber
&& \langle A(x_{k,h}^\delta,\tilde{u}),w\rangle_{W^*,W}=0, \quad \langle A(x,u),w\rangle_{W^*,W}=0,
\end{eqnarray}
(note that the last constraint is added in order to enable computation of $I_2^k$ below) and
\begin{eqnarray}\label{PDEminIvanov}
(\xh1,v_{k,h}^\delta,u^\delta_{k+1},u_{k,h}^\delta)&&\in{\rm argmin}_{(x,v,u,\tilde{u})\in\mathcal{D}{(F)}\times V^3} \frac{1}{2}\|C'(\tilde{u})v+C(\tilde{u})-\ydel\|^2\\\nonumber
\mbox{ s.t. } &&\mathcal{R}(x)\leq\rho_k,\\\nonumber
 \mbox{ and } \forall w\in W: &&
\langle A'_x(x_{k,h}^\delta,\tilde{u})(x-x_{k,h}^\delta)+A'_u(x_{k,h}^\delta,\tilde{u})v,w\rangle_{W^*,W}=0,\\\nonumber
&& \langle A(x_{k,h}^\delta,\tilde{u}),w\rangle_{W^*,W}=0, \quad \langle A(x,u),w\rangle_{W^*,W}=0,
\end{eqnarray}
which are equivalent to \eqref{IRGNMTikhonov}, \eqref{IRGNMIvanov}, respectively,
with 
\[
I_1^k(x,v,u,\tilde{u})=\|C(\tilde{u})-\ydel\|\,, \quad 
I_2^k(x,v,u,\tilde{u})=\|C(u)-\ydel\|\,,\quad
I_3^k(x,v,u,\tilde{u})= \mathcal{R}(x)
\]
as quantities of interest (where $I_3^k$ is only needed for \eqref{IRGNMTikhonov}).
We assume that $C,\mathcal{R}$ and the norms can be evaluated without discretization error, so the discretized versions of $I_i^k$ only arise due to discreteness of the arguments.
Indeed, it is easy to see that the left hand sides of \eqref{eta} and \eqref{xi} can be bounded (at least approximately) by combinations of $I_1^k$ and $I_2^k$, using the triangle inequality: 
\begin{eqnarray} 
&&\|F(x_{k+1,h}^{\delta})-\ydel\|-\|F(x_{k+1}^{\delta})-\ydel\|
\nonumber\\
&&=I_1^{k+1}(x_{k+2}^\delta, v_{k+1}^\delta, u_{k+2}^\delta, \tilde{u}_{k+1}^\delta)
-I_1^{k+1}(x_{k+2,h}^\delta, v_{k+1,h}^\delta, u_{k+2,h}^\delta, \tilde{u}_{k+1,h}^\delta)
\nonumber\\
&&\quad-(I_2^{k}(x_{k+1}^\delta, v_{k}^\delta, u_{k+1}^\delta, \tilde{u}_{k}^\delta)
-I_2^{k}(x_{k+1,h}^\delta, v_{k,h}^\delta, u_{k+1,h}^\delta, \tilde{u}_{k,h}^\delta))
+R_\eta^{k+1};
\label{etaI}\\
&&\|F_h^k(x_{k,h}^{\delta})-\ydel\|-\|F(x_{k,h}^{\delta})-\ydel\|
\nonumber\\
&&=I_1^{k}(x_{k+1,h}^\delta, v_{k,h}^\delta, u_{k+1,h}^\delta, \tilde{u}_{k,h}^\delta)
-I_1^{k}(x_{k+1}^\delta, v_{k}^\delta, u_{k+1}^\delta, \tilde{u}_{k}^\delta)
\label{xiI}\,,
\end{eqnarray}
where we will neglect $R_\eta^{k+1}=\|F_h^{k+1}(x_{k+1,h}^{\delta})-\ydel\|-\|F_h^k(x_{k+1,h}^{\delta})-\ydel\|$.

It is important to note that $I_{1,h}^{k+1}$ is not equal to $I_{2,h}^k$, see \cite{KKV14}.

The computation of the a posteriori error estimators $\eta_k, \xi_k, \zeta_k$ is done as in \cite{KKV14}. These error estimators can be used within the following adaptive algorithm for error control and mesh refinement: We start on a coarse mesh, solve the discretized optimization problem and evaluate the error estimator. Thereafter, we refine the current mesh using local information obtained from the error estimator, reducing the error with respect to the quantity of interest. This procedure is iterated until the value of the error estimator is below the given tolerance \eqref{etaxizeta}, cf. \cite{BeVex}.

In this case, all the variables $x,v,u,\tilde{u}$ are subject to a new discretization. For better readability we will partially omit the iteration index $k$ and the discretization index $h$. The previous iterate $\xk$ is fixed and not subject to a new discretization. 

Consider now the cost functional for \eqref{PDEminTikhonov}
\[
J(x,v,\tilde{u}) = \|C'(\tilde{u})v+C(\tilde{u})-\ydel\|^p+\alpha_k\mathcal{R}(x)
\]
and define the Langrangian functional
\begin{eqnarray}\label{LagTi}\nonumber
L(x,v,u,\tilde{u},\lambda,\tilde{\mu},\mu):=J(x,v,\tilde{u})+\langle A'_x(x_k^\delta,\tilde{u})(x-x_k^\delta)+A'_u(x_k^\delta,\tilde{u})v,\lambda\rangle_{W^*,W}\\ +\langle A(x_k^\delta,\tilde{u}),\tilde{\mu}\rangle_{W^*,W}+\langle A(x,u),\mu\rangle_{W^*,W}\,,
\end{eqnarray}
assuming for simplicity that $\mathcal{D}(F)=X$.
The first-order necessary optimality conditions for \eqref{PDEminTikhonov} are given by  stationarity for the Lagrangian $L$. Setting $z=(x,v,u,\tilde{u},\lambda,\tilde{\mu},\mu)$, it reads 
\[
L'(z)(dz)=0, \forall dz \in Z = X\times V\times V\times V\times  W\times W\times W
\]
and for the discretized problem, 
\[
L'(z_h)(dz_h)=0, \forall dz_h \in Z_h = X_h\times V_h\times V_h\times V_h \times W_h\ \times W_h\times W_h\,.
\]
To derive a posteriori error estimators for the error with respect to the quantities of interest ($I_1, I_2, I_3$), we introduce auxiliary functionals $M_i$:
\[ 
M_i(z,\bar{z})= I_i(z)+L'(z)\bar{z}, \quad z,\bar{z} \in Z, \quad i=1,2,3, 
\]
Let $\tilde{z}=(z,\bar{z}) \in \tilde{Z} = Z \times Z$ and $\tilde{z}_h=(z_h,\bar{z}_h) \in \tilde{Z}_h = Z_h \times Z_h$ be continuous and discrete stationary points of $M_i$ satisfying
\[
M'(\tilde{z})(d\tilde{z})=0, \forall d\tilde{z} \in Z \qquad 
M'(\tilde{z}_h)(d\tilde{z}_h)=0, \forall d\tilde{z}_h \in Z_h\,,
\]
respectively.
Then, $z, z_h$ are continuous and discrete stationary points of $L$ and there holds $I_i(z)=M_i(\tilde{z}), i = 1,2,3$. Thus the $z$ part, as computed already during the numerical solution of the minimization problem \eqref{PDEminTikhonov} (or \eqref{PDEminIvanov}) remains fixed for all $i\in\{1,2,3,\}$. Moreover, after computing the discrete stationary point $z_h$ for $L$ (e.g., by applying Newton's method), it requires only one more Newton step to compute the $\bar{z}$ coordinate of the stationary point for $M$ from 
\[
L''(z_h)(\bar{z}_{i,h},d\bar{z})=-I_i'(z_h)d\bar{z} , \forall d\tilde{z}_h \in Z_h.
\]
According to \cite{BeVex}, there holds
\[
I_i(x,v,\tilde{u})-I_i(x_h,v_h,\tilde{u}_h)=\frac{1}{2}M'(\tilde{z}_h)(\tilde{z}-\hat{z}_h)+R, \quad \forall \hat{z}_h \in Z_h \quad i=1,2,3,
\]
with a remainder term $R$ of order $O(\|\tilde{z}-\tilde{z}_h\|^3)$ that is therefore neglected.
Thus we use
\[
I_i^k(z)-I_i^k(z_h) \approx \frac{1}{2}M_i'(z_h,\bar{z}_{i,h})(\pi_h\tilde{z}_{i,h}-\tilde{z}_{1,h})=\varepsilon^k_i,
\]
where $\pi_h$ is an operator to approximate the interpolation error as in \cite{KKV14}, typically defined by local averaging, to define the estimators $\eta_k$, $\xi_k$, $\zeta_k$ according to the rule
\begin{equation}\label{etaepszeta}
\eta_{k+1}= \varepsilon^{k+1}_1 +\varepsilon^{k}_2\,, \quad \xi_k=\varepsilon^{k}_1\,, \quad \zeta_k=\varepsilon^{k}_3;
\end{equation}
cf. \eqref{etaI}, \eqref{xiI}. The estimators obtained by this procedure can be used to trigger local mesh refinement until the requirements \eqref{etaxizeta} are met.   

Explictly, for $p=2$ (for simplicity) such a stationary point $z=(x,v,u,\tilde{u},\lambda,\tilde{\mu})$ can be computed by solving the following system of equations (analogously for the discrete stationary point of $L$)
\begin{eqnarray}\label{firsteq}
-(A^{'}_x(x,u)^*\mu+A^{'}_{x} (x_k^\delta,\tilde{u})^{*}\lambda) \in \alpha_k \partial \mathcal{R}(x); &&\\
 2\langle C^{'}(\tilde{u})(dv),C^{'}(\tilde{u})v+C(\tilde{u})-y^\delta\rangle +\langle A^{'}_u(x_k^\delta,\tilde{u})(dv),\lambda\rangle=0, &\forall dv \in V;&\\\label{thirdeq}
 \langle A^{'}_u(x,u)(du),\mu\rangle=0, &\forall du \in V;&\\
\langle A^{''}_{xu} (x_k^\delta,\tilde{u})(x-x_k^\delta,d\tilde{u})+A^{''}_{uu} (x_k^\delta,\tilde{u})(v,d\tilde{u}),\lambda\rangle +\langle A^{'}_u(x^\delta_k,\tilde{u})(d\tilde{u}),\tilde{\mu}\rangle \quad \quad \; \nonumber\\
+2\langle C^{''}(\tilde{u})(d\tilde{u},v)+C^{'}(\tilde{u})(d\tilde{u}),C^{'}(\tilde{u})v+C(\tilde{u})-y^\delta\rangle =0, &\forall d\tilde{u} \in V;&\\
\langle A^{'}_x (x_k^\delta,\tilde{u})(x-x_k^\delta)+A^{'}_u (x_k^\delta,\tilde{u})v,d\lambda\rangle = 0, &\forall d\lambda\in W;&\\
\langle A(x_k^\delta,\tilde{u}),d\tilde{\mu}\rangle=0, &\forall d\tilde{\mu} \in W;&\\\label{lasteq}
\langle A(x,u),d\mu\rangle=0, &\forall d\mu \in W.&
\end{eqnarray}
Note that \eqref{lasteq} is decoupled from the other equations and that if $A^{'}_u(x,u)^*$ is injective, equation \eqref{thirdeq} implies $\mu=0$.

Summarizing, since we have a convex minimization problem, after solving a nonlinear system of seven equations to find the minimizer, we need only one more Newton step to compute the error estimators to check whether we need a refinement on the mesh or not.

Regarding the problem \eqref{PDEminIvanov}, we have the following Lagrangian functional
\begin{eqnarray}\label{LagIv}\nonumber
L(x,v,u,\tilde{u},\lambda,\tilde{\mu},\mu):=J(x,v,\tilde{u})+\langle A'_x(x_k^\delta,\tilde{u})(x-x_k^\delta)+A'_u(x_k^\delta,\tilde{u})v,\lambda\rangle_{W^*,W}\\
+\langle A(x^\delta_k,\tilde{u}),\tilde{\mu}\rangle_{W^*,W}+\langle A(x,u),\mu\rangle_{W^*,W},
\end{eqnarray}
where we rewrite the cost functional $J(x,v,\tilde{u})$ for \eqref{PDEminIvanov} as 
\[
J(x,v,\tilde{u}) = \frac{1}{2}\|C'(\tilde{u})v+C(\tilde{u})-\ydel\|^{2} +I_{(-\infty,0]}(\mathcal{R}(x)-\rho);
\]
and the indicator functional $I_{(-\infty,0]}(\mathcal{R}(x)-\rho)$ takes the role of a regularization functional. The following optimality system is the same as above, just with \eqref{firsteq} replaced by
\begin{equation}\label{firsteq2}
-(A^{'}_x(x,u)^*\mu+A^{'}_{x} (x_k^\delta,\tilde{u})^{*}\lambda) \in \partial I_{(-\infty,0]}(\mathcal{R}(x)-\rho).
\end{equation}

Note that the bound on $I_2$ only appears -- via  \eqref{etaepszeta} -- in connection to the assumption $\eta_k \leq \bar{\tau}\delta$, for $k \leq k_*(\delta,\ydel)$ in (\ref{etaxizeta}). This may be satisfied in practice without refining explicitly with respect to $\eta_k$, but simply by refining with respect to the other error estimators $\xi_k$ (and $\zeta_k$ in the Tikhonov case). 
The fact that $I^k_{1,h}$ and $I_{2,h}^{k-1}$ only differ in the discretization level, motivates the assumption that for small $h$, we have $I^k_{1,h} \approx I_{2,h}^{k-1}$ and $\eta_{k-1} \approx \xi_k$. Thefore, the algorithm used in actual computations will be built neglecting $I_2$ and hence skipping the constraint $\langle A(x,u),w\rangle_{W^*,W}=0, \;\forall w \in W$ in (\ref{PDEminTikhonov}) and (\ref{PDEminIvanov}), which implies modifications on the Lagrangians (\ref{LagTi}) and (\ref{LagIv}).
Therefore, the corresponding optimality systems for $p=2$ in the Tikhonov case is given by 
\begin{eqnarray}\label{x}
-A^{'}_{x} (x_k^\delta,\tilde{u})^{*}\lambda \in \alpha_k \partial \mathcal{R}(x); &&\\\label{v}
 2\langle C^{'}(\tilde{u})(dv),C^{'}(\tilde{u})v+C(\tilde{u})-y^\delta\rangle +\langle A^{'}_u(x_k^\delta,\tilde{u})(dv),\lambda\rangle=0, &\forall dv \in V;&\\\label{mu}
\langle A^{''}_{xu} (x_k^\delta,\tilde{u})(x-x_k^\delta,d\tilde{u})+A^{''}_{uu} (x_k^\delta,\tilde{u})(v,d\tilde{u}),\lambda\rangle +\langle A^{'}_u(x^\delta_k,\tilde{u})(d\tilde{u}),\tilde{\mu}\rangle \quad \quad \; \nonumber\\
+2\langle C^{''}(\tilde{u})(d\tilde{u},v)+C^{'}(\tilde{u})(d\tilde{u}),C^{'}(\tilde{u})v+C(\tilde{u})-y^\delta\rangle =0, &\forall d\tilde{u} \in V;&\\\label{lambda}
\langle A^{'}_x (x_k^\delta,\tilde{u})(x-x_k^\delta)+A^{'}_u (x_k^\delta,\tilde{u})v,d\lambda\rangle = 0, &\forall d\lambda\in W;&\\\label{u}
\langle A(x_k^\delta,\tilde{u}),d\tilde{\mu}\rangle=0, &\forall d\tilde{\mu} \in W.&
\end{eqnarray}
Note that equation \eqref{u} is decoupled from the others. Therefore, the strategy is to solve \eqref{u} first, then solve the linear system \eqref{x},\eqref{v},\eqref{lambda} for $(x,v,\lambda)$, and finally compute $\tilde{\mu}$ via the  linear equation \eqref{mu}. 
Here, the system \eqref{x},\eqref{v},\eqref{lambda} can be interpreted as the optimality conditions for the following problem
\begin{eqnarray*}
(\xh1,v_{k,h}^\delta)&&\in{\rm argmin}_{(x,v)\in\mathcal{D}{(F)}\times V} \|C'(\tilde{u})v+C(\tilde{u})-\ydel\|^2+\alpha_k\mathcal{R}(x)\\
\mbox{ s.t. } \forall w\in W: &&  
\langle A'_x(x_{k,h}^\delta,\tilde{u})(x-x_{k,h}^\delta)+A'_u(x_{k,h}^\delta,\tilde{u})v,w\rangle_{W^*,W}=0.
\end{eqnarray*}

For the Ivanov case, we have to solve \eqref{v}-\eqref{u} with 
\begin{equation}\label{x2}
-A^{'}_{x} (x_k^\delta,\tilde{u})^{*}\lambda \in \partial I_{(-\infty,0]}(\mathcal{R}(x)-\rho)
\end{equation} 
in place of \eqref{x}, hence again \eqref{u} is decoupled from the other equations, \eqref{mu} is linear with respect to $\tilde{\mu}$, once $(x,v,\lambda)$ has been computed, and the remaining system for $(x,v,\lambda)$ can be interpreted as the optimality conditions for the following problem
\begin{eqnarray*}
(\xh1,v_{k,h}^\delta)&&\in{\rm argmin}_{(x,v)\in\mathcal{D}{(F)}\times V} \frac{1}{2}\|C'(\tilde{u})v+C(\tilde{u})-\ydel\|^2\\
\mbox{ s.t. } &&\mathcal{R}(x)\leq\rho_k,\\
 \mbox{ and } \forall w\in W: &&
\langle A'_x(x_{k,h}^\delta,\tilde{u})(x-x_{k,h}^\delta)+A'_u(x_{k,h}^\delta,\tilde{u})v,w\rangle_{W^*,W}=0.
\end{eqnarray*}

\begin{remark}
Since DWR estimators are based on residuals which are computed in the optimization process, the additional costs for estimation are very low, which makes this approach attractive for our purposes. However, although these error estimators are known to work efficiently in practice (see \cite{BeVex}), they are not reliable, i.e., the conditions $I^k_i(z)-I^k_i(z_h) \leq \epsilon_i^k$, $i=1,2,3$ can not be guaranteed in a strict sense in the computations, since we neglect the remainder term $R$ and use an approximation for $\tilde{z}-\hat{z}_h$. As our analysis in Theorem \ref{th:conv} is kept rather general, it is not restricted to DWR estimators and would also work with different (e.g., reliable) error estimators. 
\end{remark}

\section{Model Examples}\label{sec:modex}

We present a model example to illustrate the abstract setting from the previous section. Consider the following inverse source problem for a semilinear elliptic PDE, where the model and observation equations are given by
\begin{eqnarray}
-\Delta u +\kappa u^3 &=\chi_{\omega_c}\src,& \mbox{in } \Omega \subset \mathbb{R}^d, \label{modelPDE}\\
u&=0,& \mbox{on } \partial \Omega, \label{modelbc}\\
C(u) &= u\mid_{\omega_o} , & \|y-y^\delta\|_{L^2(\omega_o)} \leq \delta.\label{modelobs}
\end{eqnarray}

We first of all consider Tikhonov regularization and therefore use the space of Radon measures $\mathcal{M}(\omega_c)$ as a preimage space $X$. Thus we define the operators $A: \mathcal{M}(\omega_c)\times W^{1,q^{'}}_0 (\Omega) \longrightarrow W^{-1,q} (\Omega)$, $A(\src,u) = - \Delta u + \kappa u^3 -\src$, $\kappa \in \mathbb{R}$ and the injection $C:W^{1,q^{'}}_0(\Omega)\longrightarrow L^2(\omega_o)$, $q > d$, where $\Omega$ is a bounded domain in $\mathbb{R}^d$ with $d=2$ or $3$, with Lipschitz boundary $\partial \Omega$ and $\omega_c, \omega_o \subset \Omega$ are the control domain and the observation domain, respectively.

A monotonicity argument yields well posedness of the above semilinear boundary value problem, i.e., well-definedness of $u\in W^{1,q^{'}}_0(\Omega)$ as a solution to the elliptic boundary value problem \eqref{modelPDE}, \eqref{modelbc}, as long as we can guarantee that $u^3\in W^{-1,q} (\Omega)$ for any $u\in W^{1,q^{'}}_0(\Omega)$, i.e., the embeddings $W^{1,q^{'}}_0(\Omega)\to L^{3r}(\Omega)$ and $L^r(\Omega)\to W^{-1,q} (\Omega)$ are continuous for some $r\in[1,\infty]$, which (by duality) is the case iff $W^{1,q^{'}}_0(\Omega)$ embeds continuously both into $L^{3r}(\Omega)$ and $L^{r'}(\Omega)$. By Sobolev's Embedding Theorem, this boils down to  the inequalities
\[ 
1-\frac{d}{q'}\geq -\frac{d}{3r}\mbox{ and } 1-\frac{d}{q'}\geq -\frac{d}{r'}\,,
\]
which by elementary computations turns out to be equivalent to 
\begin{equation}\label{r}
\frac{dq}{q+d}\leq r \leq \frac{dq}{3(dq-q-d)}\,,
\end{equation}
where the left hand side is larger than one and the denominator on the right hand side is positive due to the fact that for $d\geq2$ we have $q>d\geq d'=\frac{d}{d-1}$.
Taking the extremal bounds for $q>d$ -- note that the lower bound is increasing and the upper bound is decreasing with $q$ -- in \eqref{r1} we get
\begin{equation}\label{r1}
\frac{d}{2}< r < \frac{d}{3(d-2)}\,.
\end{equation}
Thus, as a by-product, we get that for any $t\in[1,\bar{t})$ there exists $q>d$ such that $W^{1,q^{'}}_0(\Omega)$ continuously embeds into $L^t$, with 
\begin{equation}\label{tbar}
\bar{t}=\infty\mbox{ in case }d=2\mbox{ and }\bar{t}=3\mbox{ in case }d=3\,.
\end{equation}

For the regularization functional $\mathcal{R}(\src)=\|\src\|_{\mathcal{M}(\omega_c)}$, the IRGNM-Tikhonov minimization step is given by (ignoring $h$ in the notation)
\begin{eqnarray*}
(\src_{k+1}^\delta,v_{k}^\delta,u_{k}^\delta)&&\in{\rm argmin}_{(\src,v,\tilde{u})\in\mathcal{M}(\omega_c)\times (W^{1,q^{'}}_0(\Omega))^2} \|C(v+\tilde{u})-\ydel\|^2_{L^2(\omega_o)} +\alpha_k\|\src\|_{\mathcal{M}(\omega_c)}\\
\mbox{ s.t. } \forall w\in W^{1,q^{'}}_{0} (\Omega): && \int_{\Omega}(\nabla v \nabla w+3\kappa \tilde{u}^2 vw)d\Omega = \int_{\Omega} wd(\src-\src_k^\delta),\\
&& \int_{\Omega} (\nabla \tilde{u}\nabla w+\kappa \tilde{u}^3w)d\Omega= \int_{\Omega} wd\src_k^\delta .
\end{eqnarray*}
Therefore, to compute this Gauss-Newton step, one first needs to solve the equation
\begin{equation}\label{nonlinear}
-\Delta \tilde{u}+\kappa\tilde{u}^3=\src_k^\delta,
\end{equation}
then solve the following optimality system with respect to $(\src,v,\lambda)$ (written in a strong formulation)
\begin{eqnarray*}
\|\lambda\|_{C_b(\omega_c)} \leq \alpha_k  \mbox{ and } \int_\Omega (\src^{*} -\lambda )d\src & \leq& 0, \forall \src^{*}\in B_{\alpha_k}^{C_b (\omega_c)}\\
-\Delta \lambda +3\kappa \tilde{u}^2\lambda +2v +2\tilde{u}&=&2\ydel \\
-\Delta v +3\kappa \tilde{u}^2v -\src&=&-\src^\delta_k,
\end{eqnarray*} 
which can be interpreted as the optimality system for the minimization problem
\begin{eqnarray}\label{subtikhonov}
(\src_{k+1}^\delta,v_{k}^\delta)&&\in{\rm argmin}_{(\src,v)\in \mathcal{M}(\omega_c)\times W^{1,q^{'}}_0(\Omega)} \|\tilde{u}+v-\ydel\|^2_{L^2(\omega_o)}+\alpha_k\|\src\|_{\mathcal{M}(\omega_c)}\\\nonumber
\mbox{ s.t. } && -\Delta v +3\kappa \tilde{u}^2 v = \src-\src^\delta_k,
\end{eqnarray}
and finally, compute $\tilde{\mu}$ by solving
\begin{equation}\label{muequation}
-\Delta \tilde{\mu} +3\kappa \tilde{u}^2\tilde{\mu}=-6\kappa\tilde{u}v\lambda -2(v+\tilde{u}-\ydel).
\end{equation}
For carrying out the IRGNM iteration, $\tilde{\mu}$ is not required, but we need it for evaluating the error estimators.

For the Ivanov case, we consider the same model and observation equations \eqref{modelPDE}, \eqref{modelbc}, \eqref{modelobs} but now we intend to regularize by imposing $L^\infty$ bounds and thus use the slightly different function space setting, $A: L^\infty(\omega_c)\times H^{1}_0 (\Omega) \longrightarrow H^{-1} (\Omega)$, $A(\src,u) = - \Delta u + \kappa u^3 -\src$, $\kappa \in \mathbb{R}$ and the injection $C: H^1_0(\Omega)\longrightarrow L^2(\omega_o)$.

The IRGNM-Ivanov minimization step with the regularization functional $\mathcal{R}(\src)=\|\src\|_{L^\infty(\omega_c)}$ is given by (ignoring the $h$ in the notation)
\begin{eqnarray*}
(\src_{k+1}^\delta,v_{k}^\delta,u_{k}^\delta)&&\in{\rm argmin}_{(\src,v,\tilde{u})\in L^\infty(\omega_c)\times (H^1_0(\Omega))^2} \|C(v+\tilde{u})-\ydel\|^2_{L^2(\omega_o)} \\
\mbox{ s.t. } && \|\src\|_{L^\infty(\omega_c)} \leq \rho \\
\mbox{ and } \forall w\in H^1_0 (\Omega): && \int_{\Omega}(\nabla v \nabla w+3\kappa \tilde{u}^2 vw)d\Omega = \int_{\Omega} w(\src-\src_k^\delta)d\Omega,\\
&& \int_{\Omega} (\nabla \tilde{u}\nabla w+\kappa \tilde{u}^3wd\Omega= \int_{\Omega} w\src_k^\delta d\Omega .
\end{eqnarray*}
For the Gauss-Newton step, one needs to first solve the equation \eqref{nonlinear} and then, solve the following optimality system with respect to $(\src,v,\lambda)$ (written in a strong formulation)
\begin{eqnarray*}
\|\src\|_{L^{\infty}(\omega_c)} \leq \rho  \mbox{ and } \int_\Omega (\src^{*} -\src)\lambda d\Omega & \leq& 0, \forall \src^{*}\in B_{\rho}^{L^{\infty}(\omega_c)}\\
-\Delta \lambda +3\kappa \tilde{u}^2\lambda +2v +2\tilde{u}&=&2\ydel \\
-\Delta v +3\kappa \tilde{u}^2v -\src&=&-\src^\delta_k,
\end{eqnarray*} 
which can be interpreted as the optimality system for the minimization problem
\begin{eqnarray}\label{supivanov}
(\src_{k+1}^\delta,v_{k}^\delta)&&\in{\rm argmin}_{(\src,v)\in L^{\infty}(\omega_c)\times H^1_0(\Omega)} \frac{1}{2}\|\tilde{u}+v-\ydel\|^2_{L^2(\omega_o)}\\\nonumber
\mbox{ s.t. } && \|\src\|_{L^\infty(\omega_c)} \leq \rho \\\nonumber
 && -\Delta v +3\kappa \tilde{u}^2 v = \src-\src^\delta_k\,.
\end{eqnarray}
Finally, $\tilde{\mu}$ is computed from \eqref{muequation}.

For numerically efficient methods to solve the minimization problems \eqref{subtikhonov} and \eqref{supivanov} we refer to e.g. \cite{ClaKun,ClaKun2,ClaKunCasas} and the references therein.

We finally check the tangential cone condition in case $\omega_0=\Omega$ in both settings
\[
X=\mathcal{M}(\omega_c)\,, \quad V=W^{1,q'}_0(\Omega)\,, \quad W=W^{1,q}_0(\Omega)
\]
(where we will have to restrict ourselves to $d=2$) and 
\[
X=L^\infty(\omega_c)\,, \quad V=W=H_0^1(\Omega)\,.
\]
For this purpose, we use the fact that with the notation 
$F(\tilde{\src})=\tilde{u}\vert_{\omega_o}$, $F(\src)=u\vert_{\omega_o}$, 
$F(\tilde{\src})-F(\src)=v\vert_{\omega_o}$ and 
$F(\tilde{\src})-F(\src)-F'(\src)(\tilde{\src}-\src)=w\vert_{\omega_o}$, 
the functions $v,w\in W^{1,q^{'}}_0(\Omega)$ satisfy the homogeneous Dirichlet boundary value problems for the equations
\[
-\Delta v+\kappa(\tilde{u}^2+\tilde{u}u+u^2)\, v =\tilde{\src}-\src
\]
\[
-\Delta w+\kappa u^2 w = -\kappa(\tilde{u}+2u)\, v^2\,.
\]
Using an Aubin-Nitsche type duality trick, we can estimate the $L^2$ norm of $w$ via the adjoint state $p\in W_0^{1,n}(\Omega)$, which solves 
\[
-\Delta p+\kappa u^2 p = w\,,
\]
with homogeneous Dirichlet boundary conditions, so that by H\"older's inequality
\begin{eqnarray*}
&&\|w\|_{L^2(\Omega)}^2=\langle w,(-\Delta +\kappa u^2\mbox{id}) p\rangle
=\langle (-\Delta +\kappa u^2\mbox{id})w, p\rangle\\
&&= -\kappa\langle(\tilde{u}+2u)\, v^2, p\rangle
\leq \kappa \|v\|_{L^2(\Omega)} \|\tilde{u}+2u\|_{L^m(\Omega)} \|v\|_{L^m(\Omega)} 
\|p\|_{L^{\frac{2m}{m-4}}(\Omega)}\\
&&\leq \tilde{\tilde{C}} \kappa \|v\|_{L^2(\Omega)} \|\tilde{u}+2u\|_{L^m(\Omega)} \|v\|_{L^m(\Omega)} 
\|w\|_{L^2(\Omega)}\,,
\end{eqnarray*}
where we aim at choosing $m\in[4,\infty]$, $n\in[1,\infty]$ such that indeed 
\[
\|p\|_{W_0^{1,n}(\Omega)}\leq C \|w\|_{W^{-1,n}(\Omega)}\leq \tilde{C} \|w\|_{L^2(\Omega)}
\]
and the embeddings $V\to L^m(\Omega)$, $W^{1,n}(\Omega)\to L^{\frac{2m}{m-4}}(\Omega)$, $L^2(\Omega)\to W^{-1,n}(\Omega)$ are continuous. If we succeed in doing so, we can bound 
$\tilde{\tilde{C}} \kappa \|\tilde{u}+2u\|_{L^m(\Omega)} \|v\|_{L^m(\Omega)}$ by some constant $c_{tc}$, which will be small provided $\|\tilde{\src}-\src\|_X$ and hence $\|v\|_{L^m(\Omega)}$ is small. 
Thus, the numbers $n,m$ are limited by the requirements 
\begin{equation}\label{embeddings1}
V\subseteq L^m(\Omega)
\mbox{ and }W^{1,n}(\Omega)\subseteq L^{\frac{2m}{m-4}}(\Omega)
\mbox{ and } m\geq 4\,,
\end{equation}
$L^2(\Omega)\subseteq W^{-1,n}(\Omega)$, i.e., by duality, 
\begin{equation}\label{embeddings2}
W_0^{1,n'}(\Omega)\subseteq L^2(\Omega)\,,
\end{equation} 
and the fact that $\kappa u^2 p\in L^o(\Omega)$ should be contained in $W^{-1,n'}(\Omega)$ for $u\in V\subseteq L^t(\Omega)$, and $p\in W^{1,n}(\Omega)$, which 
via H\"older's inequality in 
\[
\left(\int_\Omega (u^2p)^o\, d\Omega\right)^{1/o} \leq \|u\|_{L^t(\Omega)}^2 \|p\|_{L^\frac{ot}{t-2o}(\Omega)}
\]
and duality leads to the requirements 
\begin{equation}\label{embeddings3}
W_0^{1,n}(\Omega)\subseteq L^{o'}(\Omega)\mbox{ and }
V\subseteq L^t(\Omega)\mbox{ and }
W_0^{1,n}(\Omega)\subseteq L^\frac{ot}{t-2o}(\Omega)\mbox{ and }
o\leq \frac{t}{2}
\end{equation} 
In case $V=W_0^{1,q'}(\Omega)$ with $q>d$ and $d=3$, \eqref{embeddings1} will not work out, since according to \eqref{tbar}, $m$ cannot be chosen larger or equal to four.\\
In case $V=W_0^{1,q'}(\Omega)$ with $q>d$ and $d=2$, we can choose, e.g., $t=m=n=6$, $o=2$ to satisfy \eqref{embeddings1}, \eqref{embeddings2}, \eqref{embeddings3} as well as $t,m<\bar{t}$ as in \eqref{tbar}.\\ 
The same choice is possible in case $V=H_0^1(\Omega)$ with $d\in\{2,3\}$.

\section{Numerical tests}\label{sec:num}

In this section, we provide some numerical illustration of the IRGNM Ivanov method applied to the example from section \ref{sec:modex}, i.e., each Newton step consists of solving \eqref{nonlinear} and subsequently \eqref{supivanov}. For the numerical solution of \eqref{nonlinear} we apply a damped Newton iteration to the equation $\Phi(\tilde{u})=0$ where 
\[
\Phi: H_0^1(\Omega)\to H^{-1}(\Omega)\,,\quad \Phi(\tilde{u})=-\Delta \tilde{u}+\kappa\tilde{u}^3-\src^\delta_k\,,
\] 
\[
\tilde{u}^{l+1}=\tilde{u}^{l}-\Bigl(-\Delta \tilde{u}+3\kappa(\tilde{u}^l)^2\Bigr)^{-1}\Bigl(-\Delta \tilde{u}+\kappa(\tilde{u}^l)^3-\src_k^\delta\Bigr)\,,
\]
which is stopped as soon as $\|\Phi(\tilde{u}^l)\|_{H^{-1}(\Omega)}$ has been reduced by a  factor of $1.e-4$.
The sources $\src$ and states $u$ are discretized by piecewise linear finite elements, hence after elimination of the state via the linear equality constraint, \eqref{supivanov} becomes a box constrained quadratic program for the dicretized version of $\src$, which we solve with the method from \cite{HungerlaenderRendl15}
using the Matlab code \verb+mkr_box+ provided to us by Philipp Hungerl\"ander.
All implementations were done in Matlab.

We performed test computations on a 2-d domain $\omega_o=\omega_c=\Omega=(-1,1)^2$, on a regular computational finite element grid consisting of $2\cdot N\cdot N$ triangles, with $N=32$. We first of all consider $\kappa=1$ (below we will also show results with $\kappa=100$) and the piecewise constant exact source function 
\begin{equation}\label{src_ex}
{\src}_{ex}(x,y)=-10+20\cdot {1\!\!{\rm I}}_{B}\,,
\end{equation}
where $B=\{(x,y)\in\R^2\, : \, (x+0.4)^2+(y+0.3)^2\leq0.04\}$
cf. Figure \ref{fig:exsol_spots}, and correspondingly set $\rho=10$. In order to avoid an inverse crime, we generated the synthetic data on a finer grid and, after projection of $u_{ex}$ onto the computational grid, we added normally distributed random noise of levels $\delta\in\{0.001, 0.01, 0.1\}$ to obtain synthetic data $\ydel$. In all our computations we chose $\tau=1.1$.

In all tests we start with the constant function with value zero for $\src_0$. Moreover, we always set $\tau=1.1$. 
\begin{figure}
\includegraphics[width=0.48\textwidth]{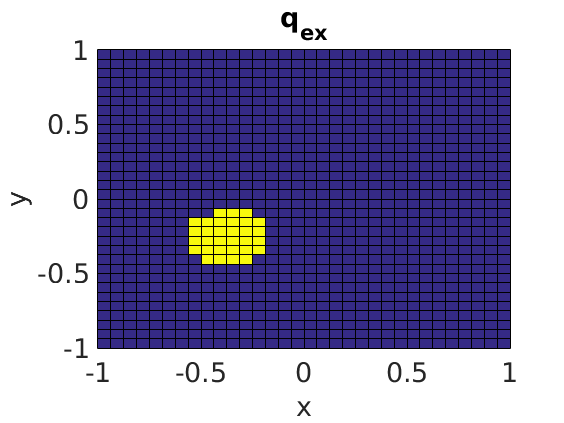}
\hspace*{0.01\textwidth}
\includegraphics[width=0.48\textwidth]{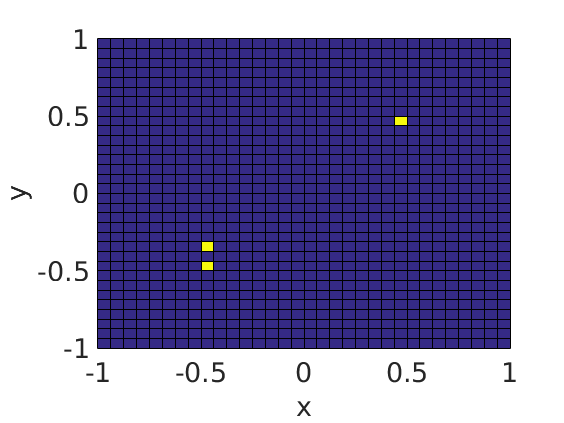}
\caption{
left: exact source ${\src}_{ex}$; 
right: locations of spots for testing weak * $L^\infty$ convergence
\label{fig:exsol_spots}}
\end{figure}
According to our convergence result Theorem \ref{th:conv} with $\cR=\|\cdot\|_{L^\infty(\Omega)}$, we can expect weak * convergence in $L^\infty(\Omega)$ here. Thus we computed the errors in certain spots within the two homogeneous regions and on their interface,
\[
\mbox{spot}_1=(0.5,0.5)\,, \quad
\mbox{spot}_2=(-0.4,-0.3)\,, \quad
\mbox{spot}_3=(-0.4,-0.5)\,, \quad
\]
cf. Figure \ref{fig:exsol_spots}, more precisely, on $\frac{1}{N}\times\frac{1}{N}$ squares located at these spots, corresponding to the piecewise constant $L^1$ functions with these supports in order to exemplarily test weak * $L^\infty$ convergence.
Additionally we computed $L^1$ errors.

Table \ref{tab_deltas} provides an illustration of convergence as $\delta$ decreases. For this purpose, we performed five runs on each noise level for each example and list the average errors. 

\begin{table}
\begin{tabular}
{|l||l|l|l|l||}
\hline
$\delta$&$\mbox{err}_{spot_1}$&$\mbox{err}_{spot_2}$&$\mbox{err}_{spot_3}$&$\mbox{err}_{L^1(\Omega)}$
\\ \hline
    0.1000&     0&4.0818&8.0043&0.0627
\\ \hline
    0.0667&0.1558&3.6454&7.8451&0.0541
\\ \hline
    0.0333&     0&3.0442&6.5726&0.0370
\\ \hline
    0.0100&     0&     0&3.9091&0.0188
\\ \hline
\end{tabular}
\\[2ex]
\caption{Convergence as $\delta\to0$: Averaged errors of five test runs with uniform noise
\label{tab_deltas}}
\end{table}

In Figures \ref{fig_conv} we plot the reconstructions for $\kappa=1$ and $\kappa=100$.
For $\kappa=1$, the noise levels $\delta\in\{0.1, 0.667,0.333,0.01\}$ correspond to a percentage of $p\in\{5.6, 18.5, 37.1, 55.6\}$ of the $L^2$ deviation of the exact state from the background state $u_0=-10^{1/3}$. In case of $\kappa=100$, where the background state is $u_0=-0.1^{1/3}$ the corresponding percentages are $p\in\{17.9,59.7,119.4,179.2\}$.
For an illustration of the noisy data as compared to the exact ones, see Figures \ref{fig_uexydel}, \ref{fig_uexydel_kappa100}. Indeed, the box constraints enable to cope with relatively large noise levels, even in the rather nonlinear regime with $\kappa=100$.
\begin{figure}
\includegraphics[width=0.43\textwidth]{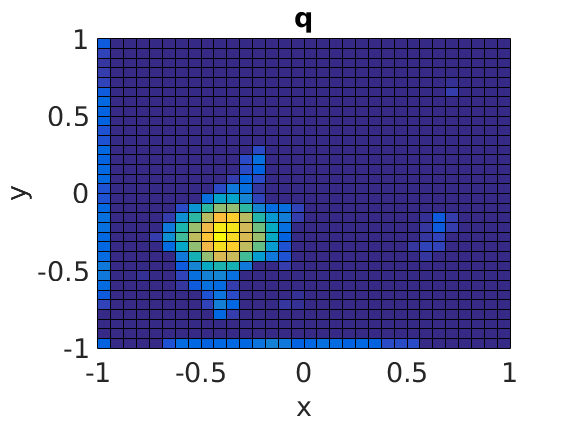}
\hspace*{0.01\textwidth}
\includegraphics[width=0.43\textwidth]{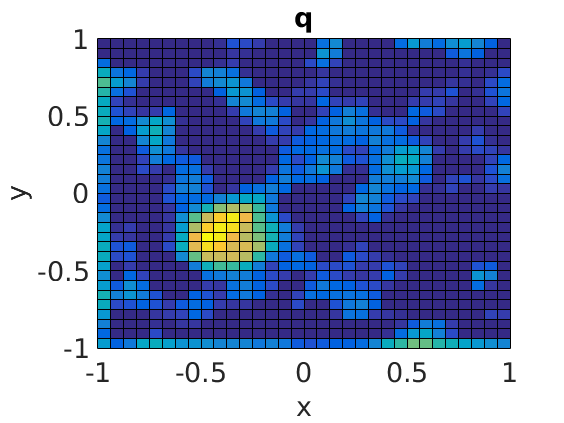}
\\
\includegraphics[width=0.43\textwidth]{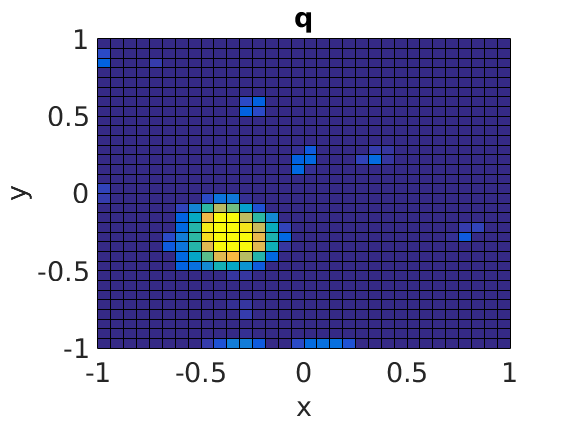}
\hspace*{0.01\textwidth}
\includegraphics[width=0.43\textwidth]{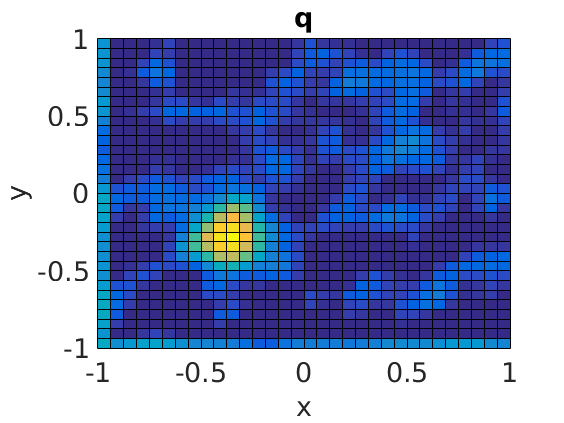}
\\
\includegraphics[width=0.43\textwidth]{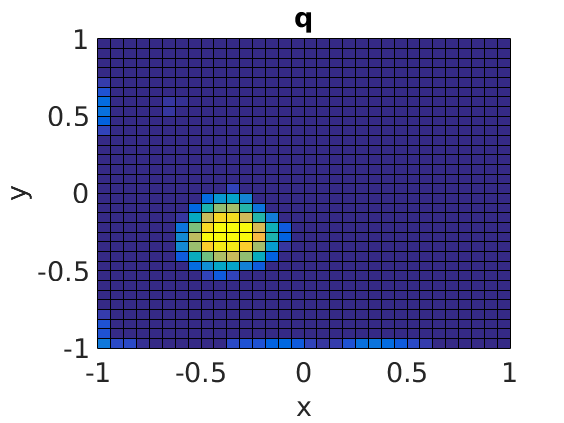}
\hspace*{0.01\textwidth}
\includegraphics[width=0.43\textwidth]{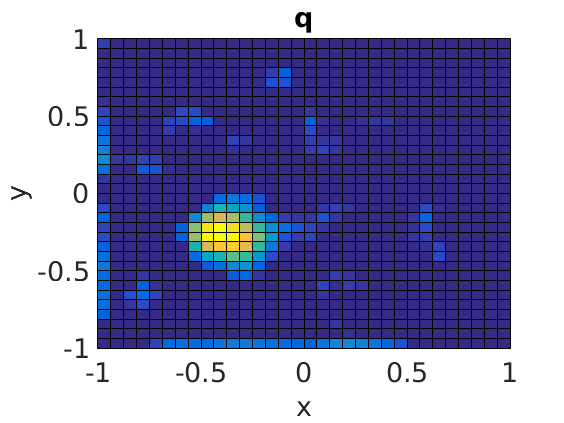}
\\
\includegraphics[width=0.43\textwidth]{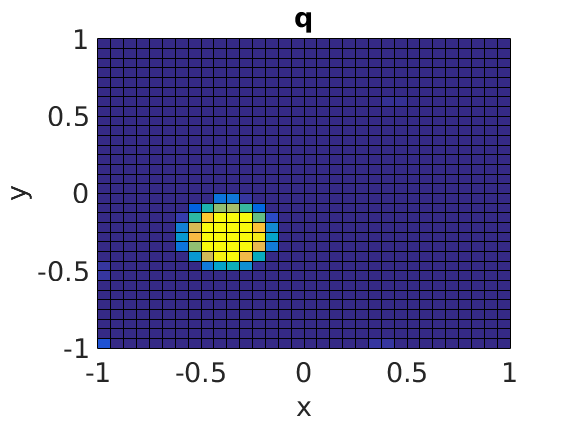}
\hspace*{0.01\textwidth}
\includegraphics[width=0.43\textwidth]{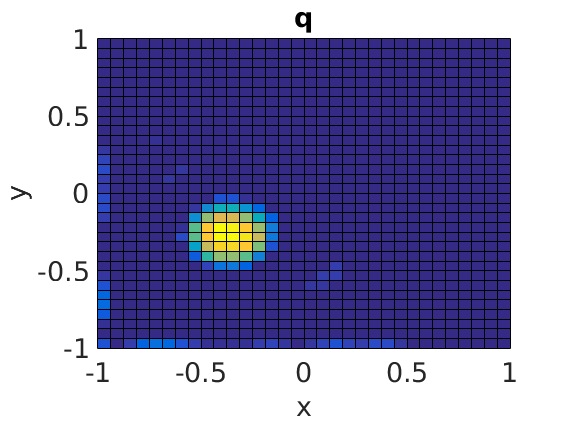}
\caption{
reconstructions from noisy data with $\delta\in\{0.1, 0.667,0.333,0.01\}$ (top to bottom) for $\kappa=1$ (left) and $\kappa=100$ (right)
\label{fig_conv}}
\end{figure}
\begin{figure}
\includegraphics[width=0.43\textwidth]{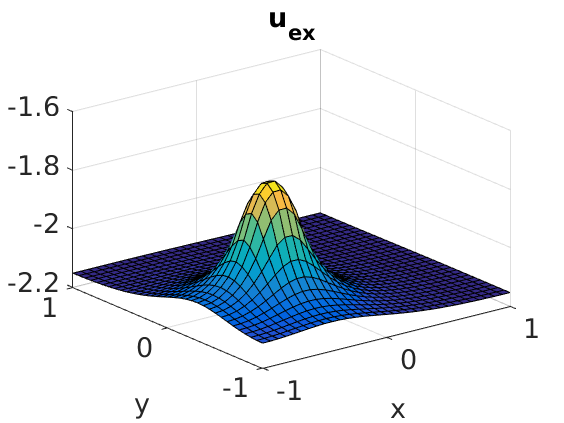}
\hspace*{0.01\textwidth}
\includegraphics[width=0.43\textwidth]{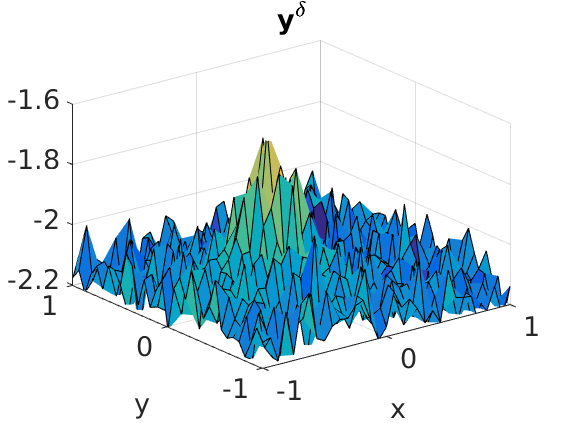}
\caption{
exact and noisy data ($\delta=0.1$) for $\kappa=1$
\label{fig_uexydel}}
\end{figure}
\begin{figure}
\includegraphics[width=0.43\textwidth]{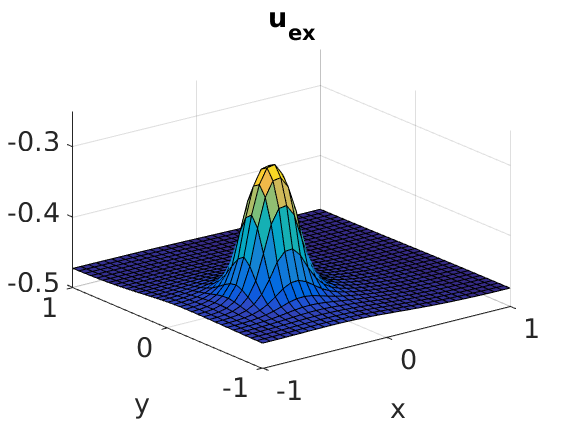}
\hspace*{0.01\textwidth}
\includegraphics[width=0.43\textwidth]{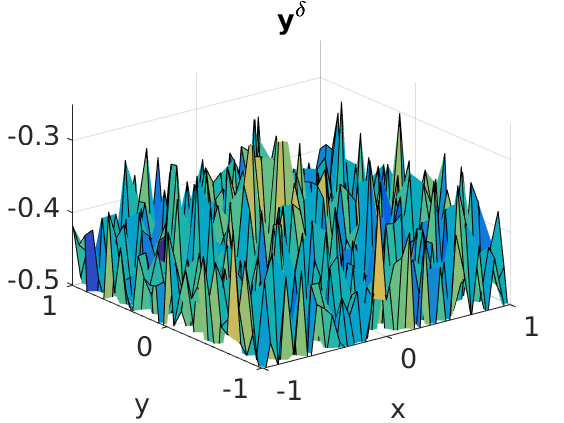}
\caption{
exact and noisy data ($\delta=0.1$) for $\kappa=100$
\label{fig_uexydel_kappa100}}
\end{figure}

\section{Conclusions and remarks}\label{sec:conclrem}
In this paper we have studied convergence of the Tikhonov type and the Ivanov type IRGNM with a stopping rule based on the discrepancy principle type. To the best of our knowledge, the Ivanov IRGNM method has not been studied so far and in both Ivanov and Tikhonov type IRGNM, convergence results without source conditions so far use stronger assumptions than the tangential cone condition used here.
We also consider discretized versions of the methods and provide discretization error bounds that still guarantee convergence. Moroever, we discuss goual oriented dual weighted residual error estimators that can be used in an adaptive discretization scheme for controlling these discretization error bounds.
An inverse source problem for a nonlinear elliptic boundary value problems illustrates our theoretical findings in the special situations of measure valued and $L^\infty$ sources.
We also provide some computational results with the IRGNM Ivanov method for the case of an $L^\infty$ source. Numerical implementations and test for a measure valued source, together with adaptive discretization is subject of ongoing work, based on the approaches from \cite{ClaKun,ClaKun2,ClaKunCasas,KKV14,KKV14b}.
Future research in this context will be concerend with convergence rates results for the IRGNM Ivanov method under source conditions.

\section*{Appendix}
The estimates in \eqref{abab} can be done by solving the following extremal value problems
\[
C_\gamma = \max_{x>0}{\phi(x)}\,, \quad C_\epsilon= \max_{x>0}{\Phi(x)}\,,
\]
where
\[
\phi (x):= ((1+x)^p-(1+\gamma)^{p-1})x^{-p} \mbox{ and } 
\Phi (x):= ((1-\epsilon)^{p-1}-(1-x)^p)x^{-p},
\]
since for any $\gamma,\epsilon \in (0,1)$,
\[
\phi(x)\leq C_\gamma \mbox{ and } \Phi(x)\leq C_\epsilon \mbox{ for all }x>0
\]
with $x:=b/a$, $a,b>0$ is equivalent to \eqref{abab}.

Solving for $C_\gamma$, we have 
\[
\phi ' (x) = px^{-(p+1)}((1+\gamma)^{p-1}-(1+x)^{p-1})\left\{\begin{array}{ll}
=0 &\Longleftrightarrow x=\gamma,\\
<0 &\mbox{ for } x>\gamma,\\
>0 &\mbox{ for } x<\gamma,
\end{array}\right.
\]
which means that
\begin{eqnarray*}
\max \phi(x)=\phi (\gamma)=\left(\frac{1+\gamma}{\gamma}\right)^{p-1},
\end{eqnarray*}
so defining $C_\gamma:=\left(\frac{1+\gamma}{\gamma}\right)^{p-1}$ and writing the resulting inequality in terms of $a$ and $b$ we have the desired formula.

The other formula in \eqref{abab} is derived analogously.

\section*{Acknowledgment}
The authors wish to thank Philipp Hun\-ger\-l\"ander, Alpen-Adria Universit\"at Klagenfurt, for providing us with the Matlab code based on the method from \cite{HungerlaenderRendl15}. 
Moreover, the authors gratefully acknowledge financial support by the Austrian Science Fund FWF under the grants I2271 ``Regularization and Discretization of Inverse Problems for PDEs in Banach Spaces'' and P30054 ``Solving Inverse Problems without Forward Operators'' as well as partial support by the Karl Popper Kolleg ``Modeling-Simulation-Optimization'', funded by the Alpen-Adria-Universit\" at Klagenfurt and by the Carin\-thian Economic Promotion Fund (KWF).\\

\bigskip

\end{document}